\documentclass[11pt]{amsart}
\usepackage[margin=1.00in]{geometry}
\usepackage{amsmath,amssymb,amsthm,mathtools}
\usepackage[hidelinks]{hyperref}
\usepackage{cleveref}
\usepackage{aliascnt}
\usepackage{mathrsfs}
\usepackage{esint}
\usepackage{booktabs}

\numberwithin{equation}{section}

\usepackage{enumerate}

\usepackage[utf8]{inputenc}

\usepackage{mathtools}

\usepackage{fancyhdr}
\usepackage{hyperref}
\usepackage{url}

\newtheorem{theorem}{Theorem}[section]

\newtheorem{lemma}[theorem]{Lemma}
\newtheorem{corollary}[theorem]{Corollary}
\newtheorem{remark}[theorem]{Remark}

\newcommand{\R}{\mathbb R}
\newcommand{\1}{\mathbf 1}

\newcommand{\abs}[1]{\lvert #1\rvert}
\newcommand{\norm}[1]{\lVert #1\rVert}

\newcommand{\diam}{\operatorname{diam}}

\newcommand{\Gt}{\mathcal{G}}

\def\Pt{\mathcal{P}}

\def\EE{\mathbf{E}}

\newcommand{\C}{\mathbb{C}}

\newcommand{\beq}{\begin{equation}}
	\newcommand{\enq}{\end{equation}}

\newcommand{\eq}[1]{\begin{align*}#1\end{align*}}

\def\be{{\beta}}

\def\og{{\omega}}

\def\Bl{\Bigl}
\def\Br{\Bigr}
\def\f{\frac}

\def\sb{{\mathbf s}}
\def\tb{{\mathbf t}}

\def\CD{{\mathcal D}}

\def\cP{{\mathcal P}}

\def\PP{{\mathbf P}}

\def\ZZ{{\mathbb Z}}

\def\Og{\Omega}

\newcommand{\wt}{\widetilde}

\def\cA{\mathcal{A}}

\def\bx{\boldsymbol{x}}
\def\by{\boldsymbol{y}}
\def\bz{\boldsymbol{z}}

\def\bk{\boldsymbol{k}}
\def\bfa{\boldsymbol{a}}

\def\bw{\boldsymbol{w}}
\def\bn{\boldsymbol{n}}
\def\bm{\boldsymbol{m}}
\def\bL{\boldsymbol{L}}

\newcommand{\bp}{ \begin{proof} }
	
	\newcommand{\ep}{  \end{proof} }

\def\be{\beta}

\def\dfrac{\displaystyle\frac}

\def\og{\omega}

\def\Bl{\Bigl}
\def\Br{\Bigr}
\def\f{\frac}

\def\FF{\mathcal{F}}

\def\mathscr{\mathcal}

\def\XXint#1#2#3{{\setbox0=\hbox{$#1{#2#3}{\int}$ }
		\vcenter{\hbox{$#2#3$ }}\kern-.55\wd0}}

\newcommand{\T}{\mathbb T}
\newcommand{\Z}{\mathbb Z}
\newcommand{\N}{\mathbb N}
\newcommand{\cT}{\mathcal T}

\newcommand{\ii}{\mathrm i}

\newcommand{\Prb}{\mathbf{P}}

\newcommand{\Qt}{\mathcal{Q}}

\begin{document}

\title[Uniform norm discretization for hyperbolic crosses]{Sampling discretization of the uniform norm for\\ 
hyperbolic-cross trigonometric polynomials}
\author{Feng Dai}
\address{F.~Dai, Department of Mathematical and Statistical Sciences\\
	University of Alberta\\ Edmonton, Alberta T6G 2G1, Canada.}
\email{fdai@ualberta.ca}

\author{Andriy Prymak}
\address{Department of Mathematics, University of Manitoba, Winnipeg, MB, R3T2N2, Canada}

\email{prymak@gmail.com}
\keywords{Sampling discretization, uniform norm, hyperbolic-cross trigonometric
polynomials, norming sets, random sampling, Smolyak construction,
commuting conditional expectations}
\subjclass[2020]{Primary 41A63, 42A05; Secondary 41A17, 60B20}

\begin{abstract}
We study sampling discretization of the uniform norm for trigonometric
polynomials with frequencies in a hyperbolic cross of level $N$ on $\T^d$.
For every $d,N\ge2$ and $\varepsilon\in(0,1)$, we construct an explicit
norming set with at most
$\bigl(C_1(1+\varepsilon^{-1})\bigr)^{d-1}N^{1+\varepsilon}$ points and
norming constant at most
$\bigl(C_2(1+\varepsilon^{-1})\bigr)^{d-1}$, where $C_1,C_2$ are absolute
constants. We also prove that independent Haar-distributed points achieve
the same exponent $1+\varepsilon$: a sample of size at least
$C(d,\varepsilon)N^{1+\varepsilon}\log(2/\eta)$ norms the entire space
simultaneously with probability at least $1-\eta$, with a norming constant
bounded by $\exp(C_d\varepsilon^{-d})$ and independent of $N$.
The deterministic construction combines uniformly stable de la Vall\'ee
Poussin sampling operators with a Smolyak-type combination identity.
The random result follows from an abstract norming theorem for sums of
spaces associated with commuting regular partitions, together with a
multiscale approximation of hyperbolic-cross polynomials.
Finally, we show that every norming set of constant $B>1$ for the
trigonometric polynomials with frequencies in $\{-1,0,1\}^d$ has at least
$\bigl(\pi d/(2e\log B)\bigr)^{d/2}$ points. This rules out quadratic
bounds in the dimension of the polynomial space with a prefactor growing
only polynomially in $d$ when the norming constant is fixed.
\end{abstract}

\maketitle

\tableofcontents

\section{Introduction}

Let $\T^d$ denote the $d$-dimensional torus, identified with $[0,2\pi)^d$ and equipped with the normalised Haar measure
\[
 d\mu_d(\bx)=(2\pi)^{-d}\,d\bx.
\]
For $f\in L^1(\T^d)$, its Fourier coefficients are defined by
\[
 \widehat f(\bk):=\int_{\T^d}f(\bx)e^{-\ii\langle \bk,\bx\rangle}\,d\mu_d(\bx),
 \qquad \bk\in\Z^d.
\]
Given a finite set $Q\subset\Z^d$, let
\[
 \cT(Q):=\left\{\sum_{\bk\in Q}c_{\bk}e^{\ii\langle \bk,\cdot\rangle}:
 c_{\bk}\in\C\right\}.
\]
We write $\cT$ for the space of all trigonometric polynomials on $\T^d$. For an integer $N\ge1$, define the hyperbolic cross
\[
 \Gamma(N,d):=\left\{\bk=(k_1,\ldots,k_d)\in\Z^d:
 \prod_{j=1}^d\max\{|k_j|,1\}\le N\right\}
\]
and set
\[
 \cT(N,d):=\cT\bigl(\Gamma(N,d)\bigr).
\]
For every fixed $d\ge2$,
\begin{equation}\label{eq:dim-HC-intro}
 \dim\cT(N,d)=|\Gamma(N,d)|
 =\frac{2^d}{(d-1)!}N(\log N)^{d-1}
 +O_d\bigl(N(\log N)^{d-2}\bigr),
 \qquad N\to\infty.
\end{equation}
In particular,
\[
 \dim\cT(N,d)\asymp_d N\log^{d-1}(2N).
\]

For $f\in C(\T^d)$ and a nonempty compact set $E\subset\T^d$, write
\[
 \|f\|_E:=\max_{\bx\in E}|f(\bx)|.
\]
Let $V$ be a linear subspace of $C(\T^d)$. We say that $V$ admits an $m$-point sampling discretization of the uniform norm with constant $B\ge1$ if there exists a finite set $W\subset\T^d$, with $|W|\le m$, such that
\begin{equation}\label{eq:norming-intro}
 \|f\|_\infty\le B\|f\|_W,
 \qquad f\in V.
\end{equation}
Such a set $W$ will be called a norming set of constant $B$ for $V$. The basic problem is to estimate the smallest possible cardinality of a norming set while keeping the norming constant under control. We refer to \cite{DPTT,KKT,KT2024} and the references therein for background on sampling discretization of the uniform norm.

The case of hyperbolic-cross trigonometric polynomials is of particular interest. According to \cite[Theorem~5.1]{DPTT}, there exist positive constants $B_d$ and $C_d$, depending only on $d$, such that $\cT(N,d)$ admits a norming set of constant $B_d$ and cardinality
\begin{equation}\label{eq:old}
 m\le C_dN^{H_d}(\log N)^{d-H_d},
 \qquad
 H_d:=\sum_{j=1}^d\frac1j.
\end{equation}
On the other hand, when $d=2$, the lower bounds obtained in \cite{KT1998,KT1999,KT2003} imply that if $\cT(N,2)$ admits an $m$-point sampling discretization with constant $B\ge1$, then (see also \cite[Theorem~5.2]{DPTT})
\begin{equation}\label{1-3-eq}
 m\ge c_1N^{1+c_2B^{-2}}\log N,
\end{equation}
where $c_1,c_2>0$ are absolute constants. Thus, even in dimension two, a fixed norming constant does not permit a bound of order $N$ up to logarithmic factors.

The first main result of this paper gives an explicit norming set whose cardinality is arbitrarily close to $N$ in the power scale. It improves \eqref{eq:old} for every fixed $d\ge2$ and every sufficiently small $\varepsilon>0$.

\begin{theorem}\label{thm-1-1}
There exist absolute constants $C_1,C_2\ge1$ with the following property. For every $\varepsilon\in(0,1)$ and all integers $d,N\ge2$, there exists an explicitly constructed finite set $W_{d,\varepsilon}(N)\subset\T^d$ such that
\begin{equation}\label{eq:generalcard-1-4}
 |W_{d,\varepsilon}(N)|
 \le \bigl(C_1(1+\varepsilon^{-1})\bigr)^{d-1}N^{1+\varepsilon}
\end{equation}
and
\begin{equation}\label{eq:generalnorm-1-5}
 \|f\|_\infty
 \le \bigl(C_2(1+\varepsilon^{-1})\bigr)^{d-1}
 \|f\|_{W_{d,\varepsilon}(N)},
 \qquad f\in\cT(N,d).
\end{equation}
\end{theorem}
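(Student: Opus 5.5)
The plan is to build the norming set from tensor-product de la Vall\'ee Poussin sampling operators on anisotropic grids, combined by a Smolyak-type identity. For a dyadic vector $\bn=(n_1,\dots,n_d)$, $n_j\ge0$, let $V_{\bn}$ be the tensor product of univariate de la Vall\'ee Poussin kernels. Each factor reproduces frequencies $|k_j|\le 2^{n_j}$ and has Fourier support in $|k_j|\le 2^{n_j+1}$. Let $L_{\bn}$ be the associated sampling (interpolation-type) operator on the uniform grid $G_{\bn}$ with about $4\cdot 2^{n_j}$ points in direction $j$. The univariate operator $f\mapsto \frac1{|G|}\sum_{x\in G}f(x)V(\cdot-x)$ satisfies $\|L_n f\|_\infty\le C\|f\|_{G_n}$ with an absolute constant $C$, uniformly in $n$, and reproduces $\cT(2^n,1)$. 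The key flexibility is that the ratio between the reproduced band and the grid size can be tuned. I will use grids of size $\asymp 2^{n_j}\cdot 2^{s}$, with an oversampling parameter $s$ chosen so that $2^{s}\asymp\varepsilon^{-1}$, or with a level-dependent shift. Taking products gives $\|L_{\bn}f\|_\infty\le C^d\|f\|_{G_{\bn}}$.

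Next I would write the Smolyak combination. With $\Delta_{n}=L_n-L_{n-1}$ in each variable, every $f\in\cT(N,d)$ satisfies
\[
f=\sum_{\bn\in\Lambda}\ \bigotimes_j \Delta_{n_j} f
=\sum_{\bn} c_{\bn} L_{\bn} f,
\]
where $\Lambda$ is a downward closed index set whose union of reproduced rectangles covers $\Gamma(N,d)$. The coefficients $c_{\bn}$ come from inclusion--exclusion and are nonzero only on the boundary layers of $\Lambda$. The naive choice $\Lambda=\{|\bn|_1\le \log_2 N+d\}$ has about $(\log N)^{d-1}$ boundary terms and grids of size $\asymp N$. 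That gives only $N\log^{d-1}N$ points with norming constant $\asymp\log^{d-1}N$, because $\sum|c_{\bn}|$ grows logarithmically. The set $W=\bigcup_{\bn:c_{\bn}\ne0}G_{\bn}$ then satisfies $\|f\|_\infty\le \sum|c_{\bn}|\,C^d\|f\|_W$.

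To obtain an $N$-independent constant, I would coarsen the scale. I would use blocks $\bn\in \tau\ZZ^d$ with step $\tau\asymp\varepsilon\log_2 N$, or equivalently levels indexed by $N^{\varepsilon j}$. Then $\Lambda$ has only $O(\varepsilon^{-1})$ levels per coordinate. This leaves $\binom{\varepsilon^{-1}+d}{d-1}\le (C(1+\varepsilon^{-1}))^{d-1}$ boundary terms, each with $|c_{\bn}|\le 2^{d}$-type bounds, and this count controls both the number of grids and $\sum|c_{\bn}|$. A coarse rectangle covering a piece of the cross overshoots it by a factor $N^{\varepsilon}$ in total product of side lengths, since one rounds up in each of about $d$ coordinates. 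Arranging the rounding so that the loss is $N^{\varepsilon}$ rather than $N^{d\varepsilon}$ means choosing a hyperbolic-type index set on the coarse lattice. I would have one free coordinate with fine resolution, or rescale $\varepsilon\to\varepsilon/d$ only where it is harmless. With that arrangement each grid has at most $C^dN^{1+\varepsilon}$ points, and the cardinality bound \eqref{eq:generalcard-1-4} follows after absorbing constants.

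The main obstacle is this coarsening step. The constants in \eqref{eq:generalcard-1-4} and \eqref{eq:generalnorm-1-5} must be $(C(1+\varepsilon^{-1}))^{d-1}$ and not $C^d\varepsilon^{-d}$ with hidden $d$-dependence such as $N^{d\varepsilon}$. To handle it, I would first treat the problem inductively in $d$. I would write $\cT(N,d)\subset\sum_j \cT(N_j,1)\otimes\cT(N/N_{j-1},d-1)$ over a geometric sequence $N_j=N^{j\varepsilon}$. Then I would apply the univariate operator in the first variable, with $N_j N^{\varepsilon}$-point grids, together with the $(d-1)$-dimensional set at level $N/N_{j-1}$. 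This gives the recursion $|W_d(N)|\le \sum_j C N_jN^{\varepsilon}|W_{d-1}(N^{1-(j-1)\varepsilon})|$ and $B_d\le C(1+\varepsilon^{-1})B_{d-1}$. The sum telescopes to $C\varepsilon^{-1}$ times $N^{1+\varepsilon}$ times the previous constant, which yields exactly the stated $(d-1)$-fold product.
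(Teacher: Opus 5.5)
Your ingredients are the right ones: uniformly stable de la Vall\'ee Poussin sampling operators, the Smolyak identity, and a norming set formed as the union of grids on the boundary layers. There is, however, a genuine gap at exactly the step you flag as the main obstacle. The inductive argument you propose to resolve it does not close.

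Take your recursion with the induction hypothesis $|W_{d-1}(M)|\le c_{d-1}M^{1+\varepsilon}$. The $j$-th term is
$C N^{j\varepsilon}N^{\varepsilon}c_{d-1}N^{(1-(j-1)\varepsilon)(1+\varepsilon)}=Cc_{d-1}N^{1+3\varepsilon-(j-1)\varepsilon^2}$, so already $j=1$ contributes $N^{1+3\varepsilon}$. Even after dropping the superfluous factor $N^{\varepsilon}$, you still get $N^{1+2\varepsilon-(j-1)\varepsilon^2}$. That factor is superfluous because $V_n$ reproduces degree $n$ exactly on $4n$ nodes with a bound uniform in $n$, so no oversampling is needed.

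The cause is the rounding loss you identified one paragraph earlier. In the block $N_{j-1}<|k_1|\le N_j$, the remaining variables are allowed up to $N/N_{j-1}$, which overshoots by $N_j/N_{j-1}=N^{\varepsilon}$. This loss is incurred once per dimension, so the sum does not telescope to $N^{1+\varepsilon}$; the exponent grows to $1+(d-1)\varepsilon$. If you repair this with ratio $N^{\varepsilon/(d-1)}$, each step has $\asymp d/\varepsilon$ levels. The product recursion $B_d\le C(d/\varepsilon)B_{d-1}$ then gives only $(Cd/\varepsilon)^{d-1}$, which is not of the form $\bigl(C(1+\varepsilon^{-1})\bigr)^{d-1}$ with an absolute $C$.

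The missing idea is that the rescaling $\varepsilon\to\varepsilon/(d-1)$ you tried to avoid is harmless, provided you count multi-indices on the simplex rather than multiplying per-coordinate level counts. The paper takes $r=\lceil(d-1)/\varepsilon\rceil$, $q=N^{1/r}$, and $T_j=V_{\lceil q^j\rceil}$ for $j\le r$, with $T_j=V_N$ beyond.

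The argument then runs as follows.
\begin{enumerate}[\rm (1)]
\item Since $\Delta_j e^{\ii kt}=0$ for $|k|\le q^{j-1}$, the hyperbolic-cross condition forces $\Delta_\nu f=0$ unless $q^{|\nu|_1-d}<N=q^r$, that is, unless $|\nu|_1\le r+d-1$.
\item The Smolyak identity then writes $f$ as a combination of $T_\nu f$ over the $d$ layers $r\le|\nu|_1\le r+d-1$, with coefficients $\pm\binom{d-1}{j}$.
\item Each grid has at most $8^dq^{r+d-1}\le 8^dN^{1+\varepsilon}$ points.
\item The number of indices is at most $d\binom{r+2d-2}{d-1}$. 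Because $r\asymp d/\varepsilon$, the $(d-1)!$ in the binomial cancels the $d^{d-1}$:
$\binom{r+2d-2}{d-1}\le\bigl(e(r+2d-2)/(d-1)\bigr)^{d-1}\le\bigl(C(1+\varepsilon^{-1})\bigr)^{d-1}$.
This single count controls both the cardinality and the norming constant.
\end{enumerate}

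Equivalently, you could unroll your induction with one global ratio $q$ and count the admissible tuples of levels $(j_1,\dots,j_{d-1})$. These lie in a simplex of size $\asymp r$, and counting them recovers exactly this binomial bound instead of the lossy product.
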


Several comments are in order. First, for fixed $d$ and $\varepsilon$, the constants in Theorem~\ref{thm-1-1} are independent of $N$. Since $H_d>1$ for $d\ge2$, the estimate \eqref{eq:generalcard-1-4} improves the power of $N$ in \eqref{eq:old} whenever $0<\varepsilon<H_d-1$. The price for taking $\varepsilon$ small is the deterioration of the norming constant and of the multiplicative constant in the cardinality estimate.

Second, Theorem~\ref{thm-1-1} is compatible with the two-dimensional lower bound \eqref{1-3-eq}. Indeed, for $d=2$, the norming constant in \eqref{eq:generalnorm-1-5} is
\[
 B=C_2(1+\varepsilon^{-1}).
\]
Substitution into \eqref{1-3-eq} gives
\[
 |W_{2,\varepsilon}(N)|
 \ge c_1N^{1+c_3(\varepsilon)}\log N,
\qquad
 c_3(\varepsilon)
 :=c_2\left(\frac{\varepsilon}{C_2(1+\varepsilon)}\right)^2.
\]
Since
\[
 c_3(\varepsilon)\le \frac{c_2}{C_2}\varepsilon,
\]
after increasing $C_2$ if necessary, we may assume $C_2>c_2$ and hence $c_3(\varepsilon)<\varepsilon$. Therefore, the lower bound above does not contradict \eqref{eq:generalcard-1-4} as $N\to\infty$.

Our second result concerns the dependence on the dimension. The constants in \eqref{eq:generalcard-1-4} and \eqref{eq:generalnorm-1-5} grow exponentially with $d$. It is therefore natural to ask whether, for a fixed $B>1$, one can find a norming set $W$ satisfying
\[
 |W|\le C(d)\bigl(\dim\cT(N,d)\bigr)^2
\]
with $C(d)$ growing at most polynomially in $d$. We show that this is impossible already for $N=1$. Since
\[
 \Gamma(1,d)=\{-1,0,1\}^d,
 \qquad
 \dim\cT(1,d)=3^d,
\]
Theorem~\ref{thm:HC-obstruction} gives the following lower bound: every finite set $W\subset\T^d$ satisfying
\[
 \|f\|_\infty\le B\|f\|_W,
 \qquad f\in\cT(1,d),
\]
must satisfy
\begin{equation}\label{eq:HC-lower-intro}
 |W|\ge
 \left(\frac{\pi d}{2e\log B}\right)^{d/2}
 =\left(\frac{\pi d}{162e\log B}\right)^{d/2}
 \bigl(\dim\cT(1,d)\bigr)^2.
\end{equation}
Consequently, if
\[
 |W|\le C(d)\bigl(\dim\cT(1,d)\bigr)^2,
\]
then necessarily
\begin{equation}\label{eq:B-growth-intro}
 \log B\ge \frac{\pi d}{162e\,C(d)^{2/d}}.
\end{equation}
In particular, if $C(d)$ is bounded by a polynomial in $d$, then $B\ge e^{cd}$ for some absolute constant $c>0$ and all sufficiently large $d$.

We next turn to random sampling. Let $\xi^1,\xi^2,\ldots$ be independent random points distributed according to $\mu_d$, and put
\[
 X_m:=\{\xi^1,\ldots,\xi^m\}.
\]
A general transfer principle proved in Section~5 converts a deterministic norming set for a translation-invariant subspace into a random norming set. Combined with Theorem~\ref{thm-1-1}, it yields the sufficient bound $m\ge C(d,\varepsilon)N^{2+\varepsilon}(1+\log(1/\eta))$. Our third main result shows that the extra factor of order $N$ can be removed. Thus, random points achieve the same exponent $1+\varepsilon$ as the explicit deterministic construction.

\begin{theorem}\label{thm:main}
Let $d,N\ge2$, $\varepsilon\in(0,1)$, and $\eta\in(0,1)$. There exist constants $C(d,\varepsilon)>0$ and $B_{d,\varepsilon}>0$ with
$
 B_{d,\varepsilon}\le
 \exp\bigl(C_d\varepsilon^{-d}\bigr)$ such that,  whenever
\begin{equation}\label{eq:m-random-intro}
 m\ge C(d,\varepsilon)\log\frac2\eta\,N^{1+\varepsilon},
\end{equation}
the inequality
\begin{equation}\label{eq:random-norming-intro}
 \|f\|_\infty\le
 B_{d,\varepsilon} \|f\|_{X_m}
\end{equation}
holds simultaneously for all $f\in\cT(N,d)$ with probability at least $1-\eta$.
\end{theorem}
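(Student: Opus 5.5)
The argument follows the route the abstract announces. We first prove an abstract norming theorem for sums of spaces attached to commuting regular partitions. We then reduce $\cT(N,d)$, up to a controlled error, to such a sum via a multiscale (dyadic Smolyak) decomposition.

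\emph{Abstract theorem.} Let $\Pt_1,\dots,\Pt_r$ be finite partitions of $\T^d$ into cells of equal $\mu_d$-measure whose conditional expectations $\EE_1,\dots,\EE_r$ commute. For us these are products of dyadic partitions in each coordinate, with $\Pt_{\bs}$ having cells of side $2\pi 2^{-s_j}$ and $|\bs|_1\approx\log_2 N$. Let $V_i$ be the space of $\Pt_i$-step functions and $V=\sum_i V_i$. We show the following: if $m\ge C\,\max_i|\Pt_i|\log(r/\eta)$, then with probability $\ge1-\eta$ every cell of every $\Pt_i$ contains a sample point, and this yields
\[
 \|g\|_\infty\le B(r)\,\|g\|_{X_m},\qquad g\in V .
\]
The first part is a coupon-collector union bound over the $\sum_i|\Pt_i|\le r\max_i|\Pt_i|$ cells. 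For the inequality, we use commutativity to build the inclusion–exclusion (Möbius) combination of products of the $\EE_i$. This represents $g\in V$ through its values on the common refinements, with a constant depending only on the combinatorics of the lattice of partitions, roughly $2^{r}$ or $r!$.

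\emph{Approximation step.} Take $f\in\cT(N,d)$ and a dyadic scale $M=N^{\varepsilon/d}$-refinement parameter. Decompose $f=\sum_{\bs}\delta_{\bs}f$ into de la Vallée Poussin dyadic blocks with frequencies in $\prod_j[-2^{s_j},2^{s_j}]$, $|\bs|_1\le\log_2 N$. For each $\bs$ we approximate $\delta_{\bs}f$ by a step function on the partition with cells of side $2^{-s_j}/K$, where $K=2^{\lceil c/\varepsilon\rceil}$. Bernstein's inequality gives an error of at most $C_dK^{-1}\|\delta_{\bs}f\|_\infty$. The number of cells is $2^{|\bs|_1}K^d\le NK^d$.

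Summing over the $\asymp(\log N)^{d-1}$ blocks would lose a logarithm, and controlling $\|\delta_{\bs}f\|_\infty$ by $\|f\|_\infty$ costs a factor $(\log N)^{d-1}$. To avoid both losses we group the blocks. For each fixed $\bs$-direction, we coarsen $|\bs|_1$ into steps of size $\varepsilon\log_2 N/d$. This leaves only $r\approx(d/\varepsilon)^{d-1}$ groups, each handled by a partition with at most $N^{1+\varepsilon}$ cells, since the refinement absorbs the intermediate scales. Thus $f=g+h$ with $g\in\sum_{i\le r}V_i$ and $\|h\|_\infty\le\frac12\|f\|_\infty$. The $\frac12$ comes from choosing $K$ large in terms of the uniformly stable de la Vallée Poussin operators of Theorem~\ref{thm-1-1}'s construction, which keep each group sum bounded by $C^{d}\|f\|_\infty$.

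Then
\[
 \|f\|_\infty\le\|g\|_\infty+\tfrac12\|f\|_\infty\le B(r)\|g\|_{X_m}+\tfrac12\|f\|_\infty
 \le B(r)\bigl(\|f\|_{X_m}+\tfrac12\|f\|_\infty\bigr)+\tfrac12\|f\|_\infty .
\]
This does not close directly. So we approximate more finely, with $\|h\|_\infty\le(4B(r))^{-1}\|f\|_\infty$, at the cost of a larger $K$. The constant $\log(4B(r))$ is polynomial in $1/\varepsilon$, so $K$ stays of order $2^{O(1/\varepsilon)}$ and is absorbed into $C(d,\varepsilon)$. This gives $\|f\|_\infty\le 2B(r)\|f\|_{X_m}$ with $B(r)\le e^{Cr}\le\exp(C_d\varepsilon^{-d})$.

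\emph{Main obstacle.} The hardest step is the grouping. We need a decomposition of the hyperbolic cross into only $O_d(\varepsilon^{-(d-1)})$ pieces, each approximable on a single commuting partition of size $N^{1+\varepsilon}$, with approximation error uniform in $\|f\|_\infty$ and free of $\log N$ factors. I would build it from the Smolyak combination identity already used for Theorem~\ref{thm-1-1}, applied with coarse level steps.
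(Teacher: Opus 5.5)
\textbf{The gap is in your abstract theorem, which is the core of the argument.} Hitting every cell of every $\Pt_i$ does not imply any norming inequality on $V=\sum_iV_i$.

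\emph{A deterministic counterexample.} Take $d=2$. Let $\Pt_1$ be the $n$ vertical strips $I_i\times\T$ and $\Pt_2$ the $n$ horizontal strips $\T\times I_i$, where the $I_i$ are equal intervals. The two conditional expectations commute, and $V=\{a(x_1)+b(x_2)\}$ with $a,b$ step functions on the $I_i$. One point in each diagonal square $I_i\times I_i$ meets every cell of both partitions. Yet $g(\bx)=c(x_1)-c(x_2)$, with $c$ a nonconstant step function, vanishes on this sample and is not zero.

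\emph{Random points at your sample size also fail.} With $m\asymp n\log n$ random points, put an edge between column $i$ and row $j$ for each sample point in $I_i\times I_j$. This gives a random bipartite graph with maximal degree $O(\log n)$, hence with diameter $\gtrsim\log n/\log\log n$. If the graph is disconnected, norming fails outright. Otherwise, let $a_i$ be the graph distance of column $i$ from a fixed vertex and $-b_j$ that of row $j$. Then $|g|=1$ on the sample while $\|g\|_\infty\gtrsim\log n/\log\log n$. So a constant $B(r)$ depending only on the number $r$ of partitions is impossible at sample sizes of order $\max_i|\Pt_i|$ up to logarithms. One needs roughly $(\max_i|\Pt_i|)^{1+\theta}$ points, with a constant depending on $\theta$.

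\emph{Why the Möbius identity cannot help.} The identity $P_V=I-\prod_i(I-\EE_i)$ expresses $g$ through cell averages $\prod_{i\in T}\EE_i\,g$. Since $g\in V$ is not constant on the cells of any single $\Pt_i$, these averages are not controlled by sample values. Consistently with this, your final constant $e^{Cr}$ with $r\approx\varepsilon^{-(d-1)}$ would beat the stated $\exp(C_d\varepsilon^{-d})$, a symptom of the missing $\theta$-dependence.

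\textbf{What the paper does instead.} In the paper's notation there are $K$ partitions, each of size at most $r$ (the reverse of yours).
\begin{enumerate}[\rm (1)]
 \item Matrix Chernoff, with the kernel bound $K_V(x,x)\le Kr$, gives an empirical $L^2$ frame on $V$ with constant $\delta$ once $m\gtrsim Kr\delta^{-2}\log(Kr/\eta)$.
 \item On that event the empirical conditional expectations $\widehat P_j$ (averages of sample values over each cell) satisfy $\|\widehat P_jF\|_\infty\le\|F\|_{\Omega_m}$ and $\|\widehat P_j-P_j\|_{2\to2}\le2\delta$.
 \item Commutativity is used only to get $h_K(S)=0$ on $V$, where $S=K^{-1}\sum_jP_j$ and $h_K(z)=\prod_{j=1}^K(1-\frac{K}{j}z)$.
 \item Hence $\Lambda=h_K(\widehat S)$ satisfies $\|\Lambda\|_{2\to2}\le C_K\delta$. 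The Nikolskii bound $\|F\|_\infty\le\sqrt{Kr}\,\|F\|_2$ then makes $\Lambda^\ell$ a uniform contraction when $\delta\approx r^{-\theta/2}$.
 \item The identity $F=(I-\Lambda)^{-1}Q_K(\widehat S)\widehat SF$ yields $\|F\|_\infty\le2\ell M_K^\ell\|F\|_{\Omega_m}$ for $m\gtrsim r^{1+\theta}\log(Kr/\eta)$.
\end{enumerate}
This is where both the exponent loss $r^{\theta}$ and the constant $\exp(CK/\theta)$ come from.

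\textbf{Your grouping step is fine.} It is essentially the paper's macro-partition construction. Index space is cut into boxes $C_\beta$ of side about $L/q$, with one partition $\CD_{a(\beta)}$ per box meeting the layer $|\bn|_1=L+d$, and at most $dq^{d-1}$ of them. With $\theta\approx\varepsilon$, the factor $r^\theta$ is absorbed into $N^{\varepsilon}$, and $K/\theta\approx\varepsilon^{-d}$ gives the stated norming constant.

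Two smaller points. Each block satisfies $\|\Psi_{\bn}f\|_\infty\le A^d\|f\|_\infty$, so there is no $(\log N)^{d-1}$ loss there. Also, an $N$-independent refinement is unnecessary: the paper takes $\ell_0\approx(\log N)^d$ and absorbs $\ell_0^d$ into $N^{\varepsilon}$.
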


\begin{remark}
The deterministic and random results have the same exponent $1+\varepsilon$ in $N$, but their norming constants have very different dependence on \(\varepsilon\). 
\end{remark}

We briefly describe the main ideas of the proofs. The proof of Theorem~\ref{thm-1-1} is based on uniformly stable sampling operators associated with the de la Vall\'ee Poussin kernels. We choose a geometric sequence of frequency scales and apply a Smolyak-type combination identity to the resulting tensor-product operators. The hyperbolic-cross condition forces all active multi-indices to lie in a simplex of bounded width. The norming set is then obtained as a union of tensor-product grids corresponding to the boundary layers of this simplex. This replaces the use of the mixed Bernstein inequality in the proof of \eqref{eq:old} and leads to the estimate $N^{1+\varepsilon}$.

The proof of the dimension-dependent obstruction \eqref{eq:HC-lower-intro} uses the positive trigonometric polynomial
\[
 F(\bx)=\prod_{j=1}^d\cos^2\left(\frac{x_j}{2}\right).
\]
Translation invariance implies that translates of the superlevel set $\{F\ge B^{-1}\}$ centred at the points of a norming set must cover the torus. A Gaussian upper bound for $F$ then reduces the problem to a volume estimate for a Euclidean ball.

The proof of Theorem~\ref{thm:main} has two main components. First, we establish an abstract random norming theorem for a sum
\[
 V=H_1+\cdots+H_K
\]
of spaces of functions that are constant on regular partitions of a finite probability space, under the assumption that the corresponding conditional expectation operators commute. Matrix Chernoff bounds provide an empirical $L^2$ frame on $V$. The empirical conditional expectations are contractions in the uniform norm, and an annihilating polynomial for the average of the commuting projections allows us to pass from the empirical $L^2$ control to uniform norming.

Second, we transfer this abstract result to $\cT(N,d)$. A  de la Vall\'ee Poussin decomposition writes a hyperbolic-cross polynomial as a sum of dyadic frequency blocks. Each block is approximated uniformly by a function that is constant on a suitable dyadic partition. The active frequency indices are grouped into a bounded number, depending only on $d$ and $\varepsilon$, of macro-families whose associated conditional expectations commute. The resulting piecewise-constant model is then normed by random points through the abstract theorem, and the approximation error is absorbed into the norming inequality.

The paper is organised as follows. In Section~2 we introduce the uniformly stable sampling operators used in the deterministic construction. Section~3 contains the proof of Theorem~\ref{thm-1-1}. In Section~4 we prove the high-dimensional obstruction stated in Theorem~\ref{thm:HC-obstruction}. Section~5 gives the transfer principle from deterministic to random norming. Section~6 establishes the abstract random norming theorem for commuting regular partitions. In Section~7 we construct the dyadic grids, the de la Vall\'ee Poussin decomposition, and in Section 8.1, we construct  the macro-partitions needed for the transfer to hyperbolic crosses.   Theorem~\ref{thm:main} is proved in Section~8.

\section{Uniformly stable sampling operators}

The proof of \eqref{eq:old} given in \cite{DPTT} is based on the mixed Bernstein inequality
\[
 \norm{\partial_1\cdots\partial_d f}_\infty
 \le C(d)N(\log N)^{d-1}\norm{f}_\infty,
 \qquad f\in \cT(N,d).
\]
The factor $(\log N)^{d-H_d}$ in \eqref{eq:old} arises from this derivative estimate. In the present argument, we replace this step by a family of exact sampling operators whose norms are bounded uniformly in the degree.

We begin with the one-dimensional case. For an integer $n\ge1$, let
\[
 F_n(t):=\frac1{n+1}
 \left(\frac{\sin((n+1)t/2)}{\sin(t/2)}\right)^2
\]
be the Fej\'er kernel of order $n$, and define the de la Vall\'ee Poussin kernel
\beq
 K_n(t):=2F_{2n-1}(t)-F_{n-1}(t).
\enq
We also set $K_0(t)=1$. For $n\ge1$, define the sampling operator $V_n:C(\T)\to C(\T)$ by
\begin{equation}\label{eq:SM}
 V_nf(x):=\frac1{4n}\sum_{j=0}^{4n-1}
 f\left(\frac{\pi j}{2n}\right)
 K_n\left(x-\frac{\pi j}{2n}\right).
\end{equation}
For completeness, set $V_0f:=f(0)$. We shall use the grid
\begin{equation}\label{eq:grid-Gn}
 G_n:=\left\{\frac{j\pi}{2n}:j=0,1,\ldots,4n-1\right\},
 \qquad n\ge1.
\end{equation}
The following standard properties of $V_n$ will be used repeatedly:
\begin{enumerate}[\rm (i)]
 \item $V_nf=f$ for every $f\in\cT([-n,n])$;
 \item $V_nf\in\cT([-2n+1,2n-1])$ for every $f\in C(\T)$;
 \item there exists an absolute constant $A\ge1$ such that
 \begin{equation}\label{eq:Vn-stability}
  \norm{V_nf}_\infty\le A\max_{\omega\in G_n}|f(\omega)|,
  \qquad f\in C(\T),\quad n\ge1;
 \end{equation}
 \item $\lim_{n\to\infty}\|V_nf- f\|_{L^\infty(\T)}=0$ for every $f\in C(\T)$. Equivalently,
 \[
  f=V_0f+\sum_{n=1}^\infty(V_nf-V_{n-1}f),
 \]
 where the series converges uniformly.
\end{enumerate}

We next pass to several variables. Given an operator $U:C(\T)\to C(\T)$ and an integer $1\le j\le d$, let $U^{(j)}:C(\T^d)\to C(\T^d)$ denote the operator obtained by applying $U$ in the $j$th variable. Namely,  for $f\in C(\T^d)$ and $\bx=(x_1,\ldots,x_d)\in\T^d$,
\[
 U^{(j)}f(\bx)
 :=U\bigl[f(x_1,\ldots,x_{j-1},\,\cdot\,,x_{j+1},\ldots,x_d)\bigr](x_j).
\]
Clearly, the operators $U^{(j)}$ acting in distinct variables commute. In particular, property (i) above gives
\begin{equation}\label{eq:tensor-reproduction}
 V_{n_1}^{(1)}\cdots V_{n_d}^{(d)}f=f,
 \qquad
 f\in\cT\bigl([-n_1,n_1]\times\cdots\times[-n_d,n_d]\bigr).
\end{equation}

\section{Proof of Theorem~\ref{thm-1-1}}

Fix $d\ge2$. For a multi-index $\nu=(\nu_1,\ldots,\nu_d)\in\Z_+^d$, write
\[
 |\nu|_1:=\nu_1+\cdots+\nu_d.
\]
We first record the Smolyak combination identity in the form needed below.

\begin{lemma}[Smolyak combination identity]\label{lem:combination}
Let $P_j:C(\T)\to C(\T)$, $j=0,1,\ldots$, be a sequence of linear operators, and set
\[
 D_0:=P_0,
 \qquad
 D_j:=P_j-P_{j-1},\quad j\ge1.
\]
Then, for every integer $n\ge d-1$,
\begin{equation}\label{eq:abstractcombination}
 \sum_{|\nu|_1\le n}
 D_{\nu_1}^{(1)}\cdots D_{\nu_d}^{(d)}
 =\sum_{j=0}^{d-1}(-1)^j\binom{d-1}{j}
 \sum_{|\nu|_1=n-j}
 P_{\nu_1}^{(1)}\cdots P_{\nu_d}^{(d)},
\end{equation}
where all sums are taken over $\nu\in\Z_+^d$.
\end{lemma}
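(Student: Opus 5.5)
The identity is purely algebraic: it holds in the commutative algebra generated by the one-variable operators, since operators acting in distinct variables commute. I would treat $P_j^{(i)}$ as formal commuting symbols, one family for each coordinate $i$, and prove the identity by generating functions. The plan is to introduce a formal variable $z$ and compare coefficients of $z^n$ after summing suitable series; equivalently, one can expand each $D$ directly and count multiplicities. I will use the second, more elementary route, and keep the first as a check.

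\emph{Step 1 (expansion).} Write $D_{\nu_i}^{(i)}=P_{\nu_i}^{(i)}-P_{\nu_i-1}^{(i)}$, with the convention $P_{-1}:=0$; this also covers $D_0=P_0$. Expanding the product gives
\[
 D_{\nu_1}^{(1)}\cdots D_{\nu_d}^{(d)}=\sum_{\mathbf{e}\in\{0,1\}^d}(-1)^{|\mathbf{e}|_1}P^{(1)}_{\nu_1-e_1}\cdots P^{(d)}_{\nu_d-e_d}.
\]
Summing over $|\nu|_1\le n$ and substituting $\mu=\nu-\mathbf{e}$, the coefficient of a fixed product $P_{\mu_1}^{(1)}\cdots P_{\mu_d}^{(d)}$ with $\mu\in\Z_+^d$ is
\[
 c(\mu)=\sum_{\mathbf{e}\in\{0,1\}^d:\ |\mu|_1+|\mathbf{e}|_1\le n}(-1)^{|\mathbf{e}|_1}
 =\sum_{k=0}^{\min(d,\,n-|\mu|_1)}(-1)^k\binom dk.
\]
Terms with some $\mu_i=-1$ vanish because $P_{-1}=0$. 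This is why the convention is harmless: it exactly encodes $D_0=P_0$.

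\emph{Step 2 (evaluating the coefficient).} Put $s=n-|\mu|_1\ge0$. If $s\ge d$, the sum is $(1-1)^d=0$. If $0\le s\le d-1$, the partial alternating binomial sum identity gives $\sum_{k=0}^s(-1)^k\binom dk=(-1)^s\binom{d-1}{s}$, which follows from Pascal's rule by telescoping. Hence only the layers $|\mu|_1=n-j$ with $0\le j\le d-1$ survive, each with coefficient $(-1)^j\binom{d-1}{j}$. This is exactly the right-hand side of \eqref{eq:abstractcombination}.

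\emph{Step 3 (role of $n\ge d-1$).} The hypothesis ensures $n-j\ge0$ for every $j\le d-1$, so all layers on the right-hand side are genuine. For smaller $n$ the identity still holds, provided empty layers are read as zero. I would also check that the rearrangement in Step 1 is legitimate: all sums are finite, and the reindexing $\nu\mapsto(\mu,\mathbf{e})$ is a bijection onto pairs with $\mu_i\ge -1$, $\mathbf{e}\in\{0,1\}^d$ and $|\mu|_1+|\mathbf{e}|_1\le n$, so nothing is lost.

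The only real obstacle is bookkeeping in the boundary cases $\nu_i=0$ during the reindexing. The convention $P_{-1}=0$ disposes of these cleanly, and the rest reduces to the partial binomial sum identity.
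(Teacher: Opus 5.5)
Your proof is correct, but it takes a different route from the paper's. The paper truncates ($P_k=0$ for $k>n$), forms the operator-valued polynomials $\mathcal P^{(i)}(z)=\sum_m P^{(i)}_m z^m$, and notes $\mathcal D^{(i)}(z)=(1-z)\mathcal P^{(i)}(z)$. It then reads off the coefficient of $z^n$ on both sides of $\frac{1}{1-z}\prod_i\mathcal D^{(i)}(z)=(1-z)^{d-1}\prod_i\mathcal P^{(i)}(z)$.

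You instead expand each $D^{(i)}_{\nu_i}=P^{(i)}_{\nu_i}-P^{(i)}_{\nu_i-1}$ by inclusion--exclusion. You compute the weight $c(\mu)$ of each individual product $P^{(1)}_{\mu_1}\cdots P^{(d)}_{\mu_d}$, which reduces everything to $\sum_{k=0}^s(-1)^k\binom dk=(-1)^s\binom{d-1}{s}$.

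At the level of coefficients the two computations coincide: your partial alternating binomial identity is exactly the coefficient identity behind $(1-z)^d/(1-z)=(1-z)^{d-1}$. The generating function therefore compresses your Steps 1--2 into one line. It also absorbs the boundary bookkeeping at $\nu_i=0$ automatically through $\mathcal D^{(i)}=(1-z)\mathcal P^{(i)}$, where you need the convention $P_{-1}=0$.

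Your version gives an explicit weight for every $P_\mu$. This makes it visible term by term that only the layers $n-d+1\le|\mu|_1\le n$ survive, and why the identity persists for $n<d-1$ when empty layers are read as zero.

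Neither argument needs the operators in different variables to commute. Both only expand an ordered product multilinearly, and the signs and the scalar $z$ are central.

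One small imprecision: the map $(\nu,\mathbf e)\mapsto(\mu,\mathbf e)$ is a bijection onto pairs with $\mu+\mathbf e\in\Z_+^d$, not onto all pairs with $\mu_i\ge-1$. This is harmless. For $\mu\in\Z_+^d$ the constraint holds automatically, and pairs with some $\mu_i=-1$ contribute zero in any case.
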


For completeness, we include a proof of Lemma \ref{lem:combination} below.
\begin{proof}
Since only the operators $P_0,\ldots,P_n$ occur in \eqref{eq:abstractcombination}, we may assume that $P_k=0$ for $k>n$. For $1\le i\le d$, define the operator-valued polynomials
\[
 \mathcal P^{(i)}(z):=\sum_{m\ge0}P_m^{(i)}z^m,
 \qquad
 \mathcal D^{(i)}(z):=\sum_{m\ge0}D_m^{(i)}z^m.
\]
By the definition of $D_m$,
\[
 \mathcal D^{(i)}(z)=(1-z)\mathcal P^{(i)}(z).
\]
Consequently,
\begin{equation}\label{3-2}
 \frac1{1-z}\prod_{i=1}^d\mathcal D^{(i)}(z)
 =(1-z)^{d-1}\prod_{i=1}^d\mathcal P^{(i)}(z).
\end{equation}
The coefficient of $z^n$ on the left-hand side of \eqref{3-2} is
\begin{equation}\label{3-3}
 \sum_{|\nu|_1\le n}D_{\nu_1}^{(1)}\cdots D_{\nu_d}^{(d)}.
\end{equation}
On the other hand, since
\[
 (1-z)^{d-1}=\sum_{j=0}^{d-1}(-1)^j\binom{d-1}{j}z^j,
\]
the coefficient of $z^n$ on the right-hand side is
\begin{equation}\label{3-4}
 \sum_{j=0}^{d-1}(-1)^j\binom{d-1}{j}
 \sum_{|\nu|_1=n-j}P_{\nu_1}^{(1)}\cdots P_{\nu_d}^{(d)},
\end{equation}
where an inner sum is understood to be zero when $n-j<0$. Comparing \eqref{3-3} and \eqref{3-4} proves the lemma.
\end{proof}

\begin{proof}[Proof of Theorem~\ref{thm-1-1}]
Fix $\varepsilon\in(0,1)$ and set
\[
 r:=\left\lceil\frac{d-1}{\varepsilon}\right\rceil,
 \qquad
 q:=N^{1/r},
 \qquad
 M_j:=\lceil q^j\rceil,
 \quad 0\le j\le r.
\]
Then $M_0=1$ and $M_r=N$. Define
\[
 T_j:=V_{M_j},\quad 0\le j\le r,
 \qquad
 T_j:=T_r=V_N,\quad j>r,
\]
and put
\[
 \Delta_0:=T_0,
 \qquad
 \Delta_j:=T_j-T_{j-1},\quad j\ge1.
\]
Since $T_r$ reproduces every trigonometric polynomial of degree at most $N$, we have
\begin{equation}\label{decom}
 g=T_rg=\sum_{j=0}^r\Delta_jg
   =\sum_{j=0}^\infty\Delta_jg,
 \qquad g\in\cT([-N,N]).
\end{equation}
Moreover, $\Delta_j=0$ for $j>r$, and the reproduction property of $V_n$ gives
\begin{equation}\label{3-5:eq}
 \Delta_j e^{\ii kt}=0
 \qquad\text{whenever }j\ge 1\text{ and }|k|\le q^{j-1}.
\end{equation}

For $\nu=(\nu_1,\ldots,\nu_d)\in\Z_+^d$, define
\[
 T_\nu:=T_{\nu_1}^{(1)}\cdots T_{\nu_d}^{(d)},
 \qquad
 \Delta_\nu:=\Delta_{\nu_1}^{(1)}\cdots\Delta_{\nu_d}^{(d)}.
\]
Applying \eqref{decom} in each variable yields
\begin{equation}\label{3-7-0}
 f=\sum_{\nu_1=0}^r\cdots\sum_{\nu_d=0}^r\Delta_\nu f
  =\sum_{\nu\in\Z_+^d}\Delta_\nu f,
 \qquad f\in\cT(N,d).
\end{equation}

Let
\[
 L:=r+d-1.
\]
We claim that
\begin{equation}\label{3-7}
 \Delta_\nu f=0\   \ \text{whenever}
 \qquad f\in\cT(N,d)\  \ \text{and}\  \  |\nu|_1>L.
\end{equation}
 Assume that $\Delta_\nu f\neq 0$ for some
$f\in\cT(N,d)$. Then there must exist a vector   $\bk=(k_1,\ldots,k_d)\in\Gamma(N,d)$
such that
\[
 \Delta_\nu e^{\ii\langle\bk,\bx\rangle}=\prod_{j=1}^d  \Delta_{\nu_j}[ e^{\ii k_j x_j }] \neq 0.
\]
By \eqref{3-5:eq}, this implies that
$|k_j|> q^{v_j-1}$  whenever  $\nu_j\ge 1$, whereas if $\nu_j=0$, then   
$$\max\{|k_j|, 1\}\ge 1 > q^{-1}=q^{v_j-1}.$$  Consequently, in either case, 
\[ \max\{|k_j|,1\}> q^{v_j-1},\   \  j=1,\ldots, d.\]
Multiplying these inequalities over $j=1,\ldots, d$ and using $\bk\in\Gamma(N,d)$, we obtain
\[
 q^{v_1+\cdots+v_d-d}
 <\prod_{i=1}^d\max\{|k_i|,1\}
 \le N=q^r.
\]
This implies
\[
 v_1+\cdots+v_d\le r+d-1=L,
\]
and hence  prove  the claim \eqref{3-7}.

Combining \eqref{3-7-0} and \eqref{3-7}, we obtain
\[
 f=\sum_{|\nu|_1\le L}\Delta_\nu f.
\]
Lemma~\ref{lem:combination}, applied with $P_j=T_j$ and $D_j=\Delta_j$, now gives the exact representation
\begin{equation}\label{eq:generalcombination}
 f=\sum_{j=0}^{d-1}(-1)^j\binom{d-1}{j}
 \sum_{|\nu|_1=L-j}T_\nu f.
\end{equation}

For $0\le i\le r$, set $\Lambda_i:=G_{M_i}$, and set $\Lambda_i:=G_N$ for $i>r$. Given $\nu\in\Z_+^d$, define the tensor-product grid
\[
 W_\nu:=\Lambda_{\nu_1}\times\cdots\times\Lambda_{\nu_d}.
\]
Iterating \eqref{eq:Vn-stability} in the $d$ variables gives
\begin{equation}\label{3-13}
 \norm{T_\nu\varphi}_\infty
 \le A^d\norm{\varphi}_{W_\nu},
 \qquad \varphi\in C(\T^d).
\end{equation}
Indeed, one first applies \eqref{eq:Vn-stability} in the first variable, then in the second variable, and so on; after $d$ steps, the right-hand side is the maximum of $|\varphi|$ over $W_\nu$.

Define
\begin{equation}\label{eq:generalW}
 W_{d,r}(N):=
 \bigcup_{j=0}^{d-1}\
 \bigcup_{|\nu|_1=L-j}W_\nu.
\end{equation}
Using \eqref{eq:generalcombination}, \eqref{3-13}, and the identity
\[
 \bigl|\{\nu\in\Z_+^d:|\nu|_1=s\}\bigr|
 =\binom{s+d-1}{d-1},
\]
we obtain
\begin{align*}
 \norm f_\infty
 &\le A^d\norm f_{W_{d,r}(N)}
 \sum_{j=0}^{d-1}\binom{d-1}{j}
 \binom{L-j+d-1}{d-1}\\
 &\le 2^{d-1}A^d\binom{r+2d-2}{d-1}
 \norm f_{W_{d,r}(N)}.
\end{align*}
The standard estimate
\[
 \binom ab\le\left(\frac{ea}{b}\right)^b,
 \qquad a>b\ge1,
\]
together with $r\le(d-1)\varepsilon^{-1}+1$, gives
\begin{align*}
 2^{d-1}A^d\binom{r+2d-2}{d-1}
 &\le 2^{d-1}A^d
 \left(\frac{e(r+2d-2)}{d-1}\right)^{d-1}\\
 &\le \bigl(6eA^2(1+\varepsilon^{-1})\bigr)^{d-1}.
\end{align*}
Therefore,
\begin{equation}\label{eq:generalnorm-proof}
 \norm f_\infty
 \le \bigl(6eA^2(1+\varepsilon^{-1})\bigr)^{d-1}
 \norm f_{W_{d,r}(N)},
 \qquad f\in\cT(N,d).
\end{equation}

It remains to estimate the cardinality of $W_{d,r}(N)$. Since $M_j\le2q^j$ for $0\le j\le r$, for every $\nu\in\Z_+^d$ we have
\[
 |W_\nu|
 =\prod_{i=1}^d|G_{M_{\min\{\nu_i,r\}}}|
 \le 8^d q^{|\nu|_1}.
\]
For the multi-indices occurring in \eqref{eq:generalW}, one has $r\le|\nu|_1\le r+d-1$. Hence
\begin{align*}
 |W_{d,r}(N)|
 &\le\sum_{s=r}^{r+d-1}\sum_{|\nu|_1=s}|W_\nu|\\
 &\le 8^d q^{r+d-1}
 \sum_{s=r}^{r+d-1}\binom{s+d-1}{d-1}\\
 &\le 8^d d\,N^{1+(d-1)/r}
 \binom{r+2d-2}{d-1}\\
 &\le \bigl(96(1+\varepsilon^{-1})\bigr)^{d-1}N^{1+\varepsilon}.
\end{align*}
Finally, set
\[
 W_{d,\varepsilon}(N):=W_{d,r}(N).
\]
Then \eqref{eq:generalnorm-proof} and the last cardinality estimate prove \eqref{eq:generalnorm-1-5} and \eqref{eq:generalcard-1-4}, respectively, with $C_1=96$ and $C_2=6eA^2$.
\end{proof}

\section{Dependence on the dimension}

The constants in \eqref{eq:generalcard-1-4} and \eqref{eq:generalnorm-1-5} grow exponentially with $d$. This raises the following question. Given a fixed norming constant $B>1$, can one find a norming set $W$ for $\cT(N,d)$ such that
\[
 |W|\le C(d)\bigl(\dim\cT(N,d)\bigr)^2,
\]
where $C(d)$ grows at most polynomially in $d$? We show that this is impossible already for $N=1$. Recall that
\[
 \Gamma(1,d)=\{-1,0,1\}^d,
 \qquad
 \dim\cT(1,d)=3^d.
\]

\begin{theorem}\label{thm:HC-obstruction}
Let $B>1$, and let $W\subset\T^d$ be a finite set satisfying
\begin{equation}\label{4-1}
 \norm f_\infty\le B\norm f_W,
 \qquad f\in\cT(1,d).
\end{equation}
Then
\begin{equation}\label{eq:HC-lower}
 |W|\ge
 \left(\frac{\pi d}{2e\log B}\right)^{d/2}
 =\left(\frac{\pi d}{162e\log B}\right)^{d/2}
 \bigl(\dim\cT(1,d)\bigr)^2.
\end{equation}
\end{theorem}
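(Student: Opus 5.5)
The plan follows the route sketched in the introduction: a covering argument with the positive polynomial
\[
 F(\bx)=\prod_{j=1}^d\cos^2\Bigl(\frac{x_j}{2}\Bigr)=\prod_{j=1}^d\frac{1+\cos x_j}{2}.
\]
Since each factor has frequencies in $\{-1,0,1\}$, we have $F\in\cT(1,d)$. Moreover $F(\mathbf 0)=\norm F_\infty=1$, and every translate $F_{\bz}(\bx):=F(\bx-\bz)$ again lies in $\cT(1,d)$, because the space is translation invariant.

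\emph{Covering step.} Applying \eqref{4-1} to $F_{\bz}$ gives $1=\norm{F_{\bz}}_\infty\le B\max_{\bw\in W}F(\bw-\bz)$. Hence for every $\bz\in\T^d$ there is a point $\bw\in W$ with $\bz\in\bw-S$, where
\[
 S:=\{\bx\in\T^d: F(\bx)\ge B^{-1}\}.
\]
So the translates of $S$ (up to reflection, which is harmless since $F$ is even) centred at the points of $W$ cover $\T^d$. Comparing Haar measures gives $1\le |W|\,\mu_d(S)$, that is,
\[
 |W|\ge \mu_d(S)^{-1}.
\]

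\emph{Gaussian bound.} For $|t|\le\pi$ we use $\cos(t/2)\le e^{-t^2/8}$. This follows from $\log\cos u\le -u^2/2$ on $[0,\pi/2)$, since $(\log\cos u)''=-\sec^2u\le -1$, and at $|t|=\pi$ the left side is $0$. Consequently
\[
 \cos^2(t/2)\le e^{-t^2/4}, \qquad F(\bx)\le e^{-|\bx|^2/4},
\]
where $\bx$ is represented in $[-\pi,\pi]^d$. Therefore $S$ is contained in the Euclidean ball of radius $R=2\sqrt{\log B}$ intersected with the cube, and
\[
 \mu_d(S)\le (2\pi)^{-d}\,\frac{\pi^{d/2}R^d}{\Gamma(d/2+1)}.
\]

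\emph{Volume estimate.} We use $\Gamma(d/2+1)\ge (d/(2e))^{d/2}$, which follows from Stirling or from $\Gamma(x+1)\ge (x/e)^x$. This gives
\[
 \mu_d(S)\le \frac{\pi^{d/2}\,2^d(\log B)^{d/2}(2e)^{d/2}}{(2\pi)^d\,d^{d/2}}
 =\Bigl(\frac{2e\log B}{\pi d}\Bigr)^{d/2}.
\]
Inserting this into $|W|\ge \mu_d(S)^{-1}$ yields the first form of \eqref{eq:HC-lower}. The second form is pure arithmetic: $(\dim\cT(1,d))^2=9^d=81^{d/2}$.

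The only delicate point is the Gaussian inequality. It must hold on the whole fundamental domain $[-\pi,\pi]$, not just near $0$, so that $S$ really sits inside the ball. The concavity argument for $\log\cos$ handles this directly. If $\log B$ is large the ball is no longer contained in the cube, but the bound $\mu_d(S)\le \mathrm{vol}(\text{ball})/(2\pi)^d$ remains valid regardless, so no case split is needed.
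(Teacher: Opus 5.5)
Your proof is correct and follows the paper's argument essentially step for step. You use the same test polynomial $F(\bx)=\prod_j\cos^2(x_j/2)$, the same covering of the torus by translates of $\{F\ge B^{-1}\}$, the same Gaussian bound $\cos(t/2)\le e^{-t^2/8}$ on $[-\pi,\pi]$, and the same estimate $\omega_d\le(2\pi e/d)^{d/2}$, and you additionally justify the two inequalities the paper states without proof: the Gaussian bound (via concavity of $\log\cos u+u^2/2$) and the Gamma-function bound (via $\Gamma(x+1)\ge(x/e)^x$).
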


\begin{proof}
For $\bx=(x_1,\ldots,x_d)\in[-\pi,\pi)^d$, define
\[
 F(\bx):=\prod_{j=1}^d\cos^2\left(\frac{x_j}{2}\right)
 =\frac1{4^d}\prod_{j=1}^d\bigl(e^{\ii x_j}+e^{-\ii x_j}+2\bigr).
\]
Then $F\in\cT(1,d)$, $0\le F\le1$, and $\norm F_\infty=F(0)=1$. Since $\cT(1,d)$ is translation invariant, $F(\cdot-\bfa)\in\cT(1,d)$ for every $\bfa\in\T^d$. It follows from \eqref{4-1} that, for every $\bfa\in\T^d$, there exists $\bw\in W$ such that
\[
 F(\bw-\bfa)\ge B^{-1}.
\]
Set
\[
 E_B:=\{\bx\in[-\pi,\pi)^d:F(\bx)\ge B^{-1}\}.
\]
The preceding observation gives the covering
\[
 [-\pi,\pi)^d=\bigcup_{\bw\in W}(\bw-E_B),
\]
and hence
\begin{equation}\label{4-3}
 |W|\mu_d(E_B)\ge1.
\end{equation}

It remains to estimate the measure of $E_B$. Since for $|t|\le\pi$,
\[
 \cos (t/2)\le e^{-t^2/8},
\]
it follows that
\[
 F(\bx)\le\exp\left(-\frac{|\bx|^2}{4}\right),
 \qquad \bx\in[-\pi,\pi)^d.
\]
Hence,
\[
 E_B\subset\{\bx\in[-\pi,\pi)^d:|\bx|^2\le4\log B\}.
\]
If $\omega_d$ denotes the volume of the Euclidean unit ball in $\R^d$, then
\[
 \mu_d(E_B)
 \le\frac{\omega_d(2\sqrt{\log B})^d}{(2\pi)^d}
 =\omega_d\left(\frac{\sqrt{\log B}}{\pi}\right)^d.
\]
Using
\[
 \omega_d=\frac{\pi^{d/2}}{\Gamma(d/2+1)}
 \le\left(\frac{2\pi e}{d}\right)^{d/2},
\]
we obtain
\begin{equation}\label{eq:EB-measure}
 \mu_d(E_B)\le
 \left(\frac{2e\log B}{\pi d}\right)^{d/2}.
\end{equation}
Combining \eqref{4-3} and \eqref{eq:EB-measure} proves \eqref{eq:HC-lower}.
\end{proof}

As an immediate consequence, if
\[
 |W|\le C(d)\bigl(\dim\cT(1,d)\bigr)^2,
\]
then Theorem~\ref{thm:HC-obstruction} implies
\begin{equation}\label{eq:B-growth}
 \log B\ge\frac{\pi d}{162e\,C(d)^{2/d}}.
\end{equation}
In particular, if $C(d)$ is bounded by a polynomial in $d$, then $B\ge e^{cd}$ for some absolute constant $c>0$ and all sufficiently large $d$.

\section{A transfer principle for random sampling discretization of the uniform norm}

Let $\xi^1,\xi^2,\ldots$ be independent random points distributed according to $\mu_d$ on $\T^d$, and set
\[
 X_m:=\{\xi^1,\ldots,\xi^m\},
 \qquad m\in\N.
\]

We first prove a general transfer principle for finite-dimensional translation-invariant spaces.

\begin{theorem}\label{thm:transfer}
Let $V$ be a translation-invariant linear subspace of $C(\T^d)$ with
\[
 \dim_{\R}V=n_0<\infty.
\]
Assume that there exist a constant $B\ge2$ and a finite set $W\subset\T^d$, with $|W|=m_0$, such that
\begin{equation}\label{eq:transfer-deterministic}
 \norm f_\infty\le B\norm f_W,
 \qquad f\in V.
\end{equation}
Then, for every $\eta\in(0,1)$ and every integer
\begin{equation}\label{eq:transfer-m}
 m\ge m_0\left(n_0\log(5B)+\log\frac1\eta\right),
\end{equation}
one has, with probability at least $1-\eta$,
\[
 \norm f_\infty\le4B\norm f_{X_m},
 \qquad f\in V.
\]
\end{theorem}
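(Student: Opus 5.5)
The idea is to exploit translation invariance. If the random sample $X_m$ contains, for every $\bw\in W$, a point close to $\bw-\bfa$ for one common shift $\bfa$, then the translated deterministic inequality transfers to $X_m$ up to a loss from the modulus of continuity. That kind of "simultaneous" closeness is unlikely, however. A cleaner route is a net-plus-union-bound argument.

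Fix $f\in V$ with $\norm f_\infty=1$. For every $\bfa\in\T^d$, translation invariance gives $f(\cdot+\bfa)\in V$, so \eqref{eq:transfer-deterministic} yields some $\bw\in W$ with $|f(\bw+\bfa)|\ge B^{-1}$. Hence the set $S_f:=\{\bx:|f(\bx)|\ge B^{-1}\}$ satisfies $\bigcup_{\bw\in W}(S_f-\bw)=\T^d$, so $\mu_d(S_f)\ge 1/m_0$. A single random point therefore lands in $S_f$ with probability at least $1/m_0$, and
\[
\Prb\bigl(X_m\cap S_f=\emptyset\bigr)\le(1-1/m_0)^m\le e^{-m/m_0}.
\]
This handles one function with constant $B$. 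To make the bound uniform over $V$, I would take a $\delta$-net $\mathcal N$ of the unit sphere of $(V,\norm\cdot_\infty)$ with $\delta=1/(4B)$ and $|\mathcal N|\le(1+2/\delta)^{n_0}\le(9B)^{n_0}$. The base should really be $5B$; to get it, I would apply the single-function estimate to the weaker level set $\{|g|\ge 1/(2B)\}$, which has even larger measure, and use a coarser net.

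For the union bound and approximation, note that with probability at least $1-|\mathcal N|e^{-m/m_0}$ every $g\in\mathcal N$ satisfies $\norm g_{X_m}\ge B^{-1}$. Under \eqref{eq:transfer-m}, this probability is at least $1-\eta$ once the net has cardinality at most $(5B)^{n_0}$. Given an arbitrary $f$ with $\norm f_\infty=1$, choose $g\in\mathcal N$ with $\norm{f-g}_\infty\le\delta$. Then
\[
\norm f_{X_m}\ge\norm g_{X_m}-\delta\ge B^{-1}-\delta.
\]
With the constants arranged so that this is at least $(4B)^{-1}$, we obtain $\norm f_\infty\le 4B\norm f_{X_m}$.

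The main obstacle is matching the constants $5B$ and $4B$ exactly. The standard volumetric net bound is $(1+2/\delta)^{n_0}$, which requires $\delta\ge 2/(5B-1)$, roughly $0.4/B$. But then $B^{-1}-\delta$ would be negative. To fix this, I would use the stronger single-function estimate for the level set $\{|g|\ge 1/(2B)\}$ instead of $\{|g|\ge 1/B\}$. Translation invariance plus \eqref{eq:transfer-deterministic} still gives this set measure at least $1/m_0$. Then $\delta=1/(4B)$ leaves the margin $1/(2B)-1/(4B)=1/(4B)$. The net cardinality becomes $(1+8B)^{n_0}$, so I would need to sharpen the net count, for example by using a real net over $\dim_\R V$ together with the slack in $B\ge2$. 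Failing that, I would accept a slightly worse absolute constant inside the logarithm.
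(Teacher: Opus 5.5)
Your skeleton matches the paper's proof. You use the translation-invariance covering $\T^d=\bigcup_{\bw\in W}(S_f-\bw)$ to get $\mu_d(S_f)\ge1/m_0$ (Lemma~\ref{thm:remez}), then a $\delta$-net of $(V,\norm\cdot_\infty)$ of size at most $(1+2/\delta)^{n_0}$, a union bound, and a perturbation step.

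\textbf{The gap.} As written, you do not reach the stated conclusion. You settle on $\delta=1/(4B)$ and a net of size $(1+8B)^{n_0}$, and offer to accept a worse constant inside the logarithm. That would prove the result only under a stronger hypothesis than \eqref{eq:transfer-m}.

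\textbf{The obstacle is an arithmetic slip.} Because you net the unit sphere, every $g\in\mathcal N$ has $\norm g_\infty=1$. On the good event you therefore get $\norm f_{X_m}\ge B^{-1}-\delta$, so you only need $\delta\le 3/(4B)$. With $\delta\approx 0.4/B$ the margin is about $0.6/B$, not negative.

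\textbf{The fix.} Take $\delta=3/(4B)$. Then:
\begin{itemize}
\item $|\mathcal N|\le(1+8B/3)^{n_0}\le(5B)^{n_0}$, since $B\ge1$;
\item the failure probability is at most $(5B)^{n_0}e^{-m/m_0}\le\eta$ under \eqref{eq:transfer-m};
\item every $f$ with $\norm f_\infty=1$ satisfies $\norm f_{X_m}\ge B^{-1}-3/(4B)=1/(4B)$.
\end{itemize}
Your detour through $\{|g|\ge 1/(2B)\}$ is counterproductive. Its measure bound is still only $1/m_0$, so it gains nothing probabilistically, while it forces a finer net.

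\textbf{Comparison with the paper.} The paper nets the whole unit ball with $\delta=1/(2B)$, so $|\mathcal N|\le(1+4B)^{n_0}\le(5B)^{n_0}$. It uses the scale-invariant sets $S_g(B)=\{|g|\ge\norm g_\infty/B\}$. The approximating $g$ only satisfies $\norm g_\infty\ge 1-\delta$, which gives
\[
\norm f_{X_m}\ge\frac{1-\delta}{B}-\delta=\frac{B-1}{2B^2}.
\]
This is the only place the hypothesis $B\ge2$ is needed. Your sphere-net version, with the corrected $\delta$, avoids the factor $1-\delta$ and works for every $B\ge1$.
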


For $B>1$ and $\eta\in(0,1)$, let $m_*(N,d,B,\eta)$ denote the least positive  integer $m$ such that
\[
 \norm f_\infty\le B\norm f_{X_m},
 \qquad f\in\cT(N,d),
\]
holds simultaneously  with probability at least $1-\eta$.
Combining Theorem~\ref{thm:transfer} with the deterministic construction in Theorem~\ref{thm-1-1}   gives the following preliminary random-sampling bound.

\begin{corollary}\label{cor:N2}
Let $d,N\ge2$, $\varepsilon\in(0,1)$, and $\eta\in(0,1)$. There exist constants $B_{\varepsilon,d}>0$ and $C(d,\varepsilon)>0$ such that
\[
 m_*\bigl(N,d,B_{\varepsilon,d},\eta\bigr)
 \le C(d,\varepsilon)N^{2+\varepsilon}
 \left(1+\log\frac1\eta\right).
\]
\end{corollary}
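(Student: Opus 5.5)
The plan is to apply Theorem~\ref{thm:transfer} with $V=\cT(N,d)$ and with $W=W_{d,\varepsilon}(N)$, the explicit set of Theorem~\ref{thm-1-1}.

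First we check the hypotheses. The space $\cT(N,d)$ is translation invariant, since a translate of $e^{\ii\langle\bk,\cdot\rangle}$ is a scalar multiple of itself. Its real dimension is
\[
 n_0=2|\Gamma(N,d)|\le C_dN\log^{d-1}(2N).
\]
A crude bound suffices here, namely $|\Gamma(N,d)|\le (2N+1)^d$ combined with the counting estimate $|\Gamma(N,d)|\le C_d N(\log 2N)^{d-1}$. The latter follows by summing over the dyadic shells of $\prod\max\{|k_j|,1\}$; we may take it as known from \eqref{eq:dim-HC-intro}, uniformly for $N\ge2$ after enlarging the constant.

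Theorem~\ref{thm-1-1} gives
\[
 m_0\le \bigl(C_1(1+\varepsilon^{-1})\bigr)^{d-1}N^{1+\varepsilon/2},
\]
where we apply it with $\varepsilon/2$ in place of $\varepsilon$. The norming constant is $B_0=\max\{2,(C_2(1+2\varepsilon^{-1}))^{d-1}\}$. Taking the maximum with $2$ is harmless, because a norming constant can always be enlarged, and it meets the requirement $B\ge2$ of Theorem~\ref{thm:transfer}.

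Theorem~\ref{thm:transfer} then says that for
\[
 m\ge m_0\bigl(n_0\log(5B_0)+\log(1/\eta)\bigr)
\]
we have, with probability at least $1-\eta$, the bound $\|f\|_\infty\le 4B_0\|f\|_{X_m}$ for all $f\in\cT(N,d)$. We set $B_{\varepsilon,d}:=4B_0$.

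The monotonicity of the event in $B$ gives
\[
 m_*(N,d,B_{\varepsilon,d},\eta)\le \Bigl\lceil m_0\bigl(n_0\log(5B_0)+\log(1/\eta)\bigr)\Bigr\rceil .
\]
It remains to bound the right-hand side. Since $n_0\log(5B_0)\ge1$, we have
\[
 n_0\log(5B_0)+\log(1/\eta)\le n_0\log(5B_0)\bigl(1+\log(1/\eta)\bigr).
\]
Therefore the right-hand side is at most
\[
 C'(d,\varepsilon)\,N^{1+\varepsilon/2}\cdot N\log^{d-1}(2N)\,\bigl(1+\log(1/\eta)\bigr).
\]
To finish we absorb the logarithm, using $\log^{d-1}(2N)\le C(d,\varepsilon)N^{\varepsilon/2}$ for all $N\ge2$. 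This yields the bound $C(d,\varepsilon)N^{2+\varepsilon}(1+\log(1/\eta))$. The ceiling is absorbed into the constant, since the quantity inside it is at least $1$.

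None of these steps is a genuine obstacle. The only points needing care are the $\varepsilon/2$ trick that absorbs the logarithmic factor of the dimension, and the normalisation $B\ge2$.
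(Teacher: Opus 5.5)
Your proposal is correct and follows the paper's proof essentially verbatim. You apply Theorem~\ref{thm-1-1} with $\varepsilon/2$, feed the resulting set into Theorem~\ref{thm:transfer} with $n_0\le C_dN\log^{d-1}(2N)$, and absorb $\log^{d-1}(2N)$ into $N^{\varepsilon/2}$; your extra bookkeeping only makes explicit what the paper leaves implicit (enforcing $B_0\ge2$, factoring out $1+\log(1/\eta)$ via $n_0\log(5B_0)\ge1$, and handling the ceiling).
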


\begin{proof}
Apply Theorem~\ref{thm-1-1} with $\varepsilon/2$ in place of $\varepsilon$. The resulting deterministic norming set has cardinality
\[
 m_0\le C(d,\varepsilon)N^{1+\varepsilon/2},
\]
and its norming constant depends only on $d$ and $\varepsilon$. Moreover,
\[
 n_0:=\dim_{\R}\cT(N,d)
 \le C_dN\log^{d-1}(2N).
\]
Theorem~\ref{thm:transfer} therefore applies once
\[
 m\ge C(d,\varepsilon)N^{1+\varepsilon/2}
 \left(N\log^{d-1}(2N)+\log\frac1\eta\right).
\]
Since $\log^{d-1}(2N)\le C(d,\varepsilon)N^{\varepsilon/2}$ for $N\ge2$, the asserted estimate follows.
\end{proof}

Now we return to the proof of Theorem \ref{thm:transfer}.
For $B>1$ and $f\in C(\T^d)$, define the superlevel set
\begin{equation}\label{5-1eq}
 S_f(B):=\left\{\bx\in\T^d:\
 |f(\bx)|\ge\frac1B\norm f_\infty\right\}.
\end{equation}
The following elementary observation is the main link between deterministic and random norming.

\begin{lemma}\label{thm:remez}
Let $V\subset C(\T^d)$ be translation invariant. Suppose that a finite set $W\subset\T^d$ and a constant $B>1$ satisfy
\begin{equation}\label{5-3a}
 \norm f_\infty\le B\norm f_W,
 \qquad f\in V.
\end{equation}
Then
\begin{equation}\label{5-2}
 \mu_d\bigl(S_f(B)\bigr)\ge\frac1{|W|},
 \qquad f\in V.
\end{equation}
\end{lemma}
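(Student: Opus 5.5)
The plan is to repeat the covering argument from the proof of Theorem~\ref{thm:HC-obstruction}, with the specific polynomial $F$ replaced by an arbitrary $f\in V$.

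Fix $f\in V$. If $f\equiv0$, then $S_f(B)=\T^d$ and \eqref{5-2} is trivial, so assume $\norm f_\infty>0$. For $\bfa\in\T^d$ put $f_{\bfa}:=f(\cdot+\bfa)$. By translation invariance $f_{\bfa}\in V$, and clearly $\norm{f_{\bfa}}_\infty=\norm f_\infty$.

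Applying \eqref{5-3a} to $f_{\bfa}$ gives
\[
 \norm f_\infty\le B\max_{\bw\in W}|f(\bw+\bfa)|.
\]
Since $W$ is finite, the maximum is attained. Hence there is some $\bw\in W$ with $|f(\bw+\bfa)|\ge B^{-1}\norm f_\infty$, that is, $\bw+\bfa\in S_f(B)$. Equivalently, $\bfa\in S_f(B)-\bw$.

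Since $\bfa$ was arbitrary, this yields the covering
\[
 \T^d=\bigcup_{\bw\in W}\bigl(S_f(B)-\bw\bigr).
\]
The set $S_f(B)$ is closed because $f$ is continuous, so it is measurable. The Haar measure $\mu_d$ is translation invariant, so each translate $S_f(B)-\bw$ has measure $\mu_d(S_f(B))$. Subadditivity then gives
\[
 1\le\sum_{\bw\in W}\mu_d\bigl(S_f(B)-\bw\bigr)=|W|\,\mu_d\bigl(S_f(B)\bigr),
\]
which is \eqref{5-2}.

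No step is a genuine obstacle here. The only points needing care are the trivial case $f\equiv0$, the use of finiteness of $W$ to attain the maximum, and the measurability of $S_f(B)$, all of which are immediate.
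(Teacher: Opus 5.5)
Your proposal is correct and follows the paper's proof: translate $f$, apply the norming inequality to get a covering of $\T^d$ by $|W|$ translates of $S_f(B)$, and take Haar measure. The differences are cosmetic. You translate by $+\bfa$ rather than $-\bfa$, and you spell out the case $f\equiv0$ and the measurability of $S_f(B)$, both of which the paper leaves implicit.
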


\begin{proof}
Fix $f\in V$. For every $\bfa\in\T^d$, translation invariance and \eqref{5-3a} give
\[
 \norm f_\infty
 =\norm{f(\cdot-\bfa)}_\infty
 \le B\norm{f(\cdot-\bfa)}_W.
\]
Thus, for every $\bfa\in\T^d$, there exists $\bw\in W$ such that
\[
 |f(\bw-\bfa)|\ge B^{-1}\norm f_\infty.
\]
Equivalently, $\bfa\in\bw-S_f(B)$, and therefore
\[
 \T^d=\bigcup_{\bw\in W}\bigl(\bw-S_f(B)\bigr).
\]
Taking Haar measure proves \eqref{5-2}.
\end{proof}

\begin{remark}
Lemma~\ref{thm:remez} immediately yields a Nikolskii inequality. Indeed, for $0<p<\infty$ and $f\in V$,
\[
 \norm f_p^p
 \ge\int_{S_f(B)}|f(\bx)|^p\,d\mu_d(\bx)
 \ge\frac1{|W|}\left(\frac{\norm f_\infty}{B}\right)^p.
\]
Consequently,
\[
 \norm f_\infty\le B|W|^{1/p}\norm f_p,
 \qquad f\in V.
\]
\end{remark}

\begin{remark}
Applying Lemma~\ref{thm:remez} to the norming set supplied by Theorem~\ref{thm-1-1}, we obtain, for every $f\in\cT(N,d)$ and $\varepsilon\in(0,1)$,
\[
 \mu_d\bigl(S_f(B_\varepsilon)\bigr)
 \ge\frac1{A_\varepsilon N^{1+\varepsilon}},
\]
where
\begin{equation}\label{5-3}
 B_\varepsilon:=\bigl(C_2(1+\varepsilon^{-1})\bigr)^d,
 \qquad
 A_\varepsilon:=\bigl(C_1(1+\varepsilon^{-1})\bigr)^d.
\end{equation}
\end{remark}

\begin{proof}[Proof of Theorem~\ref{thm:transfer}]
Let
\[
 \delta:=\frac1{2B}
 \qquad\text{and}\qquad
 \mathcal B:=\{f\in V:\norm f_\infty\le1\}.
\]
Choose a maximal $\delta$-separated subset $\mathcal N$ of $\mathcal B$ in the uniform norm. Since $V$ has real dimension $n_0$, the standard volumetric estimate gives
\[
 |\mathcal N|
 \le\left(1+\frac2\delta\right)^{n_0}
 =(1+4B)^{n_0}
 \le(5B)^{n_0}.
\]
By maximality, $\mathcal N$ is also a $\delta$-net of $\mathcal B$. Furthermore, by Lemma~\ref{thm:remez},
\[
 \mu_d(S_g(B))\ge\frac1{m_0},
 \qquad g\in\mathcal N.
\]

For a nonzero $g\in V$,
\begin{equation}\label{5-6}
 X_m\cap S_g(B)\ne\varnothing
 \quad\Longleftrightarrow\quad
 \norm g_\infty\le B\norm g_{X_m}.
\end{equation}
Let $E$ be the event that $X_m\cap S_g(B)\ne\varnothing$ for every $g\in\mathcal N$.
Then
\[ E^c= \bigcup_{g\in\mathcal N} \Bl\{ X_m\cap S_g(B)=\varnothing \Br\}=\bigcup_{g\in\mathcal N} \Bl\{ \xi^j\notin S_g(B),\  \ \forall 1\leq j\leq m\Br\}.\]
Therefore, independence, the union bound, and \eqref{eq:transfer-m}  imply
\begin{align*}
 \Prb(E^c)
 &\le |\mathcal N|\sup_{g\in\mathcal N}
 \bigl(1-\mu_d(S_g(B))\bigr)^m\le(5B)^{n_0}\left(1-\frac1{m_0}\right)^m\\
 &\le\exp\left(n_0\log(5B)-\frac{m}{m_0}\right)
 \le\eta.
\end{align*}

Assume that $E$ occurs, and let $f\in V$ with $\norm f_\infty=1$. Choose $g\in\mathcal N$ such that $\norm{f-g}_\infty\le\delta$. Then $\norm g_\infty\ge1-\delta$, while \eqref{5-6} gives $\norm g_\infty\le B\norm g_{X_m}$. Hence
\begin{align*}
 \norm f_{X_m}
 &\ge\norm g_{X_m}-\norm{f-g}_\infty\\
 &\ge\frac{1-\delta}{B}-\delta
 =\frac{B-1}{2B^2}
 \ge\frac1{4B}.
\end{align*}
By homogeneity,
\[
 \norm f_\infty\le4B\norm f_{X_m},
 \qquad f\in V.
\]
Since $\Prb(E)\ge1-\eta$, the proof is complete.
\end{proof}

\section{Random norming for commuting regular partitions}\label{sec:abstract}

In this section we prove an abstract random norming theorem that will be used in the proof of Theorem~\ref{thm:main}. The result applies to sums of spaces of functions that are constant on regular partitions of a finite probability space, under the assumption that the corresponding conditional expectation operators commute. The proof combines a joint orthogonal decomposition, a matrix Chernoff estimate, and a polynomial that annihilates the average of the conditional expectations.

\subsection{General setting and main result}

Let $\Omega$ be a finite set equipped with the uniform normalised counting measure $\mu$; thus,
\[
 \mu(\{\omega\})=\frac1{|\Omega|},
 \qquad \omega\in\Omega.
\]
Throughout this section, $\langle\cdot,\cdot\rangle$ and $\norm{\cdot}_2$ denote the inner product and norm of $L^2(\Omega,\mu)$, respectively, and
\[
 \norm F_\infty:=\max_{x\in\Omega}|F(x)|.
\]

A partition $\mathcal P=\{E_1,\ldots,E_s\}$ of $\Omega$ is called \emph{regular} if all its cells have the same measure. Equivalently,
\[
 \mu(E_j)=\frac1s,
 \qquad
 |E_j|=\frac{|\Omega|}{s},
 \qquad 1\le j\le s.
\]

Let $\mathcal P_1,\ldots,\mathcal P_K$ be regular partitions of $\Omega$; repetitions are permitted. For $1\le j\le K$, set
\[
 r_j:=|\mathcal P_j|,
\]
let $\mathcal F_j$ be the $\sigma$-algebra generated by $\mathcal P_j$, and let
\[
 P_j:=\EE[\,\cdot\mid\mathcal F_j]
\]
be the associated conditional expectation. Then $P_j$ is the orthogonal projection of $L^2(\Omega,\mu)$ onto the space
\[
 H_j:=\left\{\sum_{A\in\mathcal P_j}c_A\mathbf1_A:c_A\in\C\right\}=\operatorname{span}\Bl\{\1_A:\  \ A\in \cP_j\Br\}.
\]
More explicitly,
\[
 P_jF(x)=\sum_{A\in\mathcal P_j}
 \left(r_j\int_A F\,d\mu\right)\mathbf1_A(x).
\]

Set
\[
 r:=\max_{1\le j\le K}r_j,
 \qquad
 V:=H_1+\cdots+H_K,
 \qquad
 S:=\frac1K\sum_{j=1}^KP_j,
\]
and assume that $r\ge2$ and
\begin{equation}\label{eq:commute}
 P_iP_j=P_jP_i,
 \qquad 1\le i,j\le K.
\end{equation}
We shall use the constants
\[
 M_K:=\binom{2K}{K},
 \qquad
 C_K:=2^{K+1}M_K.
\]

Let $u_1,u_2,\ldots$ be independent random points distributed uniformly on $\Omega$. For $m\in\N$, write
\[
 \Omega_m:=\{u_1,\ldots,u_m\},
 \qquad
 \norm F_{\Omega_m}:=\max_{1\le i\le m}|F(u_i)|.
\]
The sample points need not be distinct; multiplicities will always be retained in empirical sums. The main result of this section is the following.

\begin{theorem}\label{thm:abstract}
Let $\theta\in(0,1]$, set $\ell:=\lceil3/\theta\rceil$, and let $\eta\in(0,1)$. If
\begin{equation}\label{eq:samplesize}
 m\ge3C_K^2K^2r^{1+\theta}
 \log\left(\frac{2Kr}{\eta}\right),
\end{equation}
then, with probability at least $1-\eta$,
\[
 \norm F_\infty
 \le B(K,\theta)\norm F_{\Omega_m},
 \qquad F\in V,
\]
where
\[
 B(K,\theta):=2\ell M_K^\ell.
\]
Moreover,
\[
 B(K,\theta)\le\exp\left(\frac{CK}{\theta}\right)
\]
for an absolute constant $C>0$.
\end{theorem}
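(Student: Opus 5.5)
The plan is to turn exact algebraic identities for the commuting projections $P_j$ into approximate identities for their empirical counterparts. The empirical counterparts are automatically controlled by the sample values.

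\emph{Step 1 (algebra).} By \eqref{eq:commute}, the $P_j$ are commuting orthogonal projections. Hence $L^2(\Omega,\mu)=\bigoplus_{\sigma\subset\{1,\dots,K\}}W_\sigma$, where $P_j$ acts as the identity on $W_\sigma$ if $j\in\sigma$ and as $0$ otherwise. Then $V=\bigoplus_{\sigma\ne\varnothing}W_\sigma$, and $S$ acts on $W_\sigma$ as the scalar $|\sigma|/K$. Put
\[
q(x):=1-\prod_{k=1}^K\Bigl(1-\frac{Kx}{k}\Bigr).
\]
Then $q(0)=0$ and $q(k/K)=1$ for $1\le k\le K$, so $q(S)F=F$ for every $F\in V$. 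Expanding $q(S)$ in powers of $S$ and then in words $P_{j_1}\cdots P_{j_i}$, I expect the sum of the absolute values of the coefficients to be bounded by $M_K=\binom{2K}{K}$ (or a comparable quantity). Checking this combinatorial bound is one of the things to verify.

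\emph{Step 2 (empirical operators).} Define $\widehat P_jF$ on a cell $A\in\mathcal P_j$ as the average of $F(u_i)$ over the sample points $u_i\in A$, counted with multiplicity. Let $\widehat S:=K^{-1}\sum_j\widehat P_j$. We have $\widehat P_jG\in H_j\subset V$ and $\norm{\widehat P_jG}_\infty\le\norm G_{\Omega_m}\le\norm G_\infty$. Consequently every word in the $\widehat P_j$ is controlled by $\norm F_{\Omega_m}$. Since $q(0)=0$, the operator $I-q(\widehat S)$ maps $V$ into $V$.

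Let $\ell=\lceil 3/\theta\rceil$. Iterating the identity $F=q(\widehat S)F+(I-q(\widehat S))F$ gives
\[
F=\sum_{i=0}^{\ell-1}\bigl(I-q(\widehat S)\bigr)^i q(\widehat S)F+\bigl(I-q(\widehat S)\bigr)^\ell F .
\]
Every term in the sum is a polynomial in the $\widehat P_j$ with no constant term. I will show that these terms have sup norm at most roughly $\ell M_K^\ell\norm F_{\Omega_m}$, which accounts for the form $B(K,\theta)=2\ell M_K^\ell$. The remainder must then be shown to be at most $\frac12\norm F_\infty$, and absorbing it into the left-hand side gives the factor $2$.

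\emph{Step 3 (probability).} Assume every cell of every $\mathcal P_j$ receives about $m/r_j$ sample points. I will then use the matrix Chernoff bound, applied to the empirical Gram operator on $V$, whose dimension is at most $Kr$. The required ratio of sup norm to $L^2$ norm, $\norm G_\infty^2\le Kr\norm G_2^2$ on $V$, follows from regularity of the cells. This gives an empirical $L^2$ frame on $V$ with deviation
\[
\delta\lesssim\Bigl(\frac{K^2r\log(2Kr/\eta)}{m}\Bigr)^{1/2},
\]
with probability at least $1-\eta$. Under \eqref{eq:samplesize} this is at most $(C_K)^{-1}r^{-\theta/2}$. From the frame I deduce $\norm{(\widehat P_j-P_j)G}_2\lesssim\delta\norm G_2$ for $G\in V$, and hence
\[
\norm{q(\widehat S)G-q(S)G}_2\le 2^KM_K\,\delta\,\norm G_2 .
\]
On $V$ we have $I-q(\widehat S)=q(S)-q(\widehat S)$. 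Iterating $\ell$ times and converting back to sup norm with a loss of $\sqrt{Kr}$ gives
\[
\norm{(I-q(\widehat S))^\ell F}_\infty\le\sqrt{Kr}\,(C_K\delta)^\ell\norm F_\infty\le\sqrt{K}\,r^{1/2-3/2}\norm F_\infty\le\tfrac12\norm F_\infty .
\]

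\emph{Main obstacle.} The delicate step is making the $L^2$ perturbation bound rigorous uniformly over all of $V$. This requires controlling $\widehat P_j-P_j$ in operator norm on $V$ from the frame inequality, and bookkeeping the constants so that exactly the factor $3C_K^2K^2$ in \eqref{eq:samplesize} suffices; the cell-count concentration has to be folded into the same union bound. Finally, the estimate $B(K,\theta)\le\exp(CK/\theta)$ follows from $M_K\le4^K$ and $\ell\le 4/\theta$.
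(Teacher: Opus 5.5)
Your architecture is the paper's. It has the joint decomposition for the commuting $P_j$, the polynomial $q=1-h_K$ with $h_K(z)=\prod_{k=1}^K(1-Kz/k)$, and the empirical conditional expectations $\widehat P_j$ as sup-norm contractions dominated by $\|F\|_{\Omega_m}$. It then uses a matrix Chernoff frame on $V$ and $\ell$-fold iteration of $\Lambda:=h_K(\widehat S)=I-q(\widehat S)$. Your telescoping identity $F=\sum_{i<\ell}\Lambda^i q(\widehat S)F+\Lambda^\ell F$, followed by absorption, is the paper's inversion $(I-\Lambda)^{-1}=(I+\Lambda+\cdots+\Lambda^{\ell-1})(I-\Lambda^\ell)^{-1}$ unrolled. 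It gives (at least) the constant $2\ell M_K^\ell$. The coefficient bound you left to check is immediate: the coefficients of $h_K$ alternate in sign, so their absolute values sum to $h_K(-1)=\binom{2K}{K}$.

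The closing estimate, however, fails as written. You use the lossy deviation $\delta\lesssim(K^2r\log(2Kr/\eta)/m)^{1/2}$, which under \eqref{eq:samplesize} only yields $C_K\delta\le r^{-\theta/2}$, and you end with $\sqrt K\,r^{-1}\|F\|_\infty$. This is at most $\frac12\|F\|_\infty$ only when $r\ge 2\sqrt K$, while the hypothesis is just $r\ge2$; for example, $r=2$ and $K=4$ gives $\sqrt K\,r^{-1}=1$.

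The fix is to apply matrix Chernoff with $R=Kr$, which bounds both the Nikolskii constant $\sup\|G\|_\infty^2/\|G\|_2^2$ on $V$ and $\dim V$. The frame with constant $\delta$ then holds once $m\ge 3Kr\delta^{-2}\log(2Kr/\eta)$, so \eqref{eq:samplesize} allows $\delta=(C_K\sqrt K)^{-1}r^{-\theta/2}$. With this choice, $\sqrt{Kr}\,(C_K\delta)^\ell= K^{(1-\ell)/2}r^{(1-\theta\ell)/2}\le r^{-1}\le\frac12$. This extra $K^{-1/2}$ is exactly what the factor $K^2$ in \eqref{eq:samplesize} pays for.

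Three smaller points:
\begin{enumerate}[\rm (i)]
\item The step you call the main obstacle is short once you note two facts about the frame event. First, the empirical form $\langle F,G\rangle_m=\frac1m\sum_iF(u_i)\overline{G(u_i)}$ is an inner product on $V$. Second, $\widehat P_j$ is the orthogonal projection onto $H_j$ for it. For $G=(\widehat P_j-P_j)F\in H_j$ one then gets $\langle G,G\rangle_m=\langle F-P_jF,G\rangle_m-\langle F-P_jF,G\rangle\le\delta\|F\|_2\|G\|_2$, hence $\|G\|_2\le\frac{\delta}{1-\delta}\|F\|_2$.
\item No separate cell-count concentration or extra union bound is needed. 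The lower frame bound applied to $\mathbf 1_A\in H_j\subset V$ already shows that $A$ contains at least $(1-\delta)m/r_j>0$ sample points.
\item The Nikolskii bound $\|G\|_\infty^2\le Kr\|G\|_2^2$ on $V$ does not follow from regularity alone. It fails for two regular two-cell partitions of a $2n$-point set that differ by swapping one pair of points. It needs $P_V\le\sum_jP_j$, which your commuting decomposition supplies.
\end{enumerate}
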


\subsection{The joint orthogonal decomposition}

For $\omega=(\omega_1,\ldots,\omega_K)\in\{0,1\}^K$, define
\[
 \Pi_\omega:=\prod_{j=1}^K
 \bigl(\omega_jP_j+(1-\omega_j)(I-P_j)\bigr),\  \ \text{and}\  \
 E_\omega:=\Bl\{\Pi_\omega f:\  \ f\in L^2(\Og, \mu)\Br\}.
\]
Note that
\[ \Pi_\og=\Bl(\prod_{j: \  \og_j=1} P_j\Br)\Bl( \prod_{j: \  \og_j=0} (I-P_j)\Br).\]
Since the factors are commuting orthogonal projections, $\Pi_\omega$ is an orthogonal projection. Furthermore, if $\omega\ne\omega'$, then there is an index $j$ for which one of the products contains $P_j$ and the other contains $I-P_j$. Since $P_j(I-P_j)=0$, it follows that
\[
 \Pi_\omega\Pi_{\omega'}=0,\  \ \forall \og\neq \og'.
\]
On the other hand, expanding
\[
 \prod_{j=1}^K\bigl(P_j+(I-P_j)\bigr)=I
\]
gives
\[
 \sum_{\omega\in\{0,1\}^K}\Pi_\omega=I.
\]
It follows that
\begin{equation}\label{eq:jointdiag}
 L^2(\Omega,\mu)
 =\bigoplus_{\omega\in\{0,1\}^K}E_\omega,
\end{equation}
and every $F\in L^2(\Omega,\mu)$ has the orthogonal expansion
\[
 F=\sum_{\omega\in\{0,1\}^K}\Pi_\omega F.
\]

For $1\le j\le K$, we have
\[
 P_j\Pi_\omega=\Pi_\omega P_j=\omega_j\Pi_\omega.
\]
Consequently,
\begin{equation}\label{6-4}
 P_jF
 =\sum_{\omega\in\{0,1\}^K}\omega_j\Pi_\omega F
 =\sum_{\omega:\,\omega_j=1}\Pi_\omega F,
\end{equation}
and hence
\begin{equation}\label{6-5-0}
 H_j=\bigoplus_{\omega:\,\omega_j=1}E_\omega.
\end{equation}
Averaging \eqref{6-4} over $j$ yields
\begin{equation}\label{6-6-0}
 S=\sum_{\omega\in\{0,1\}^K\setminus\{0\}}
 \frac{|\omega|_1}{K}\Pi_\omega.
\end{equation}
Therefore, for every algebraic polynomial $U$ and every $F\in V$,
\begin{equation}\label{6-5}
 U(S)F
 =\sum_{\omega\in\{0,1\}^K\setminus\{0\}}
 U\left(\frac{|\omega|_1}{K}\right)\Pi_\omega F.
\end{equation}
Finally, \eqref{6-5-0} gives
\[
 V=\bigoplus_{\omega\in\{0,1\}^K\setminus\{0\}}E_\omega.
\]
Thus, if $P_V$ denotes the orthogonal projection of $L^2(\Omega,\mu)$ onto $V$, then
\begin{equation}\label{6-9}
 P_V=\sum_{\omega\in\{0,1\}^K\setminus\{0\}}\Pi_\omega.
\end{equation}

\subsection{Lemmas}

We first construct a polynomial that annihilates $S$ on $V$.

\begin{lemma}\label{lem:annih}
Define
\[
 h_K(z):=\prod_{j=1}^K\left(1-\frac Kj z\right)
 =\sum_{j=0}^Kb_jz^j.
\]
Then the following statements hold.
\begin{enumerate}[\rm (i)]
 \item There is a polynomial $Q_K(z)=\sum_{j=0}^{K-1}c_jz^j$ such that
 \[
  1-h_K(z)=zQ_K(z),
  \qquad c_j=-b_{j+1},\quad 0\le j\le K-1.
 \]
 \item One has
 \[
  \sum_{j=0}^K|b_j|=M_K=\binom{2K}{K}\le4^K,
  \qquad
  \sum_{j=0}^{K-1}|c_j|\le M_K.
 \]
 \item $h_K(S)F=0$ for every $F\in V$.
\end{enumerate}
\end{lemma}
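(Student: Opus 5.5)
Part (i) is immediate once we observe that $h_K(0)=1$. This gives $b_0=1$, so
\[
1-h_K(z)=-\sum_{j=1}^K b_j z^j=z\sum_{j=0}^{K-1}(-b_{j+1})z^j,
\]
and we may take $c_j=-b_{j+1}$.

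For (ii), note that each factor $1-\frac Kj z$ has coefficients of alternating sign. Hence the coefficients $b_j$ also alternate in sign, and
\[
\sum_j|b_j|=|h_K(-1)|=\prod_{j=1}^K\Bigl(1+\frac Kj\Bigr)=\prod_{j=1}^K\frac{j+K}{j}=\frac{(2K)!}{K!\,K!}=\binom{2K}{K}.
\]
The bound $\binom{2K}{K}\le\sum_i\binom{2K}{i}=4^K$ is standard. Finally, $\sum_j|c_j|=\sum_{j\ge1}|b_j|=M_K-1\le M_K$.

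For (iii), I will use the spectral representation \eqref{6-5} with $U=h_K$. For $F\in V$,
\[
h_K(S)F=\sum_{\omega\ne0}h_K\bigl(|\omega|_1/K\bigr)\Pi_\omega F.
\]
Every $\omega\ne0$ satisfies $|\omega|_1=j$ for some $1\le j\le K$. At $z=j/K$ the $j$-th factor of $h_K$ is $1-\frac Kj\cdot\frac jK=0$, so $h_K(j/K)=0$ and each term in the sum vanishes. This step relies on \eqref{6-5}, which in turn rests on the commutativity assumption \eqref{eq:commute} and on the joint decomposition $V=\bigoplus_{\omega\ne0}E_\omega$; both were established above.

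No step presents a real obstacle. The only point requiring care is the sign-alternation argument that evaluates $\sum_j|b_j|$ as $|h_K(-1)|$.
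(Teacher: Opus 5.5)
Your proposal is correct and follows the paper's proof essentially step for step. In (i) you use $h_K(0)=1$ to get divisibility by $z$. In (ii) you use sign alternation to get $\sum_j|b_j|=h_K(-1)=\binom{2K}{K}\le 4^K$ and $\sum_j|c_j|=M_K-1$. In (iii) you apply the spectral formula \eqref{6-5} with $U=h_K$, using that $h_K(j/K)=0$ for $1\le j\le K$.
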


\begin{proof}
Since $h_K(0)=1$, the polynomial $1-h_K$ is divisible by $z$, and comparison of coefficients gives $c_j=-b_{j+1}$. This proves (i).

The coefficients of $h_K$ alternate in sign. Therefore,
\begin{align*}
 \sum_{j=0}^K|b_j|
 &=h_K(-1)
 =\prod_{s=1}^K\left(1+\frac Ks\right)\\
 &=\frac{(K+1)(K+2)\cdots(2K)}{K!}
 =\binom{2K}{K}
 \le4^K.
\end{align*}
It follows also that
\[
 \sum_{j=0}^{K-1}|c_j|
 =\sum_{j=1}^K|b_j|
 =M_K-1\le M_K.
\]
This proves (ii).

To prove (iii), apply \eqref{6-5} with $U=h_K$. If $\omega\ne0$, then $|\omega|_1\in\{1,\ldots,K\}$, and hence
\[
 h_K\left(\frac{|\omega|_1}{K}\right)=0.
\]
Thus $h_K(S)F=0$ for every $F\in V$.
\end{proof}

The next lemma provides the $(2,\infty)$-Nikolskii inequality for the space $V$.

\begin{lemma}\label{lem:leverage}
Let $K_V$ be the reproducing kernel of $V$ in $L^2(\Omega,\mu)$. Then
\begin{equation}\label{6-10a}
 K_V(x,x)\le Kr,
 \qquad x\in\Omega,
 \qquad\text{and}\qquad
 \dim V\le Kr.
\end{equation}
In particular,
\begin{equation}\label{8-4-Nik}
 \norm F_\infty\le\sqrt{Kr}\,\norm F_2,
 \qquad F\in V.
\end{equation}
\end{lemma}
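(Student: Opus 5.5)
The plan is to handle each $H_j$ separately, bound its reproducing kernel on the diagonal by $r_j$, and then control $K_V(x,x)$ by the sum of these bounds.

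For a single regular partition $\mathcal P_j$, the normalised indicators $\sqrt{r_j}\,\1_A$, $A\in\mathcal P_j$, are pairwise orthogonal (the cells are disjoint) and have unit norm, since $\norm{\1_A}_2^2=\mu(A)=1/r_j$. Hence they form an orthonormal basis of $H_j$, and the reproducing kernel is
\[
 K_{H_j}(x,y)=r_j\sum_{A\in\mathcal P_j}\1_A(x)\1_A(y).
\]
On the diagonal this gives $K_{H_j}(x,x)=r_j\le r$, because $x$ lies in exactly one cell, and also $\dim H_j=r_j\le r$.

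Next I would pass to the sum $V=H_1+\cdots+H_K$. Since $\dim(H_1+\cdots+H_K)\le\sum_j\dim H_j$, we get $\dim V\le Kr$. For the diagonal of the kernel I will use the extremal characterisation
\[
 K_V(x,x)=\sup\bigl\{|F(x)|^2: F\in V,\ \norm F_2\le1\bigr\}.
\]
Note that $\norm{P_jF}_2\le\norm F_2$, and in fact $\sum_j\norm{P_jF}_2^2\le K\norm F_2^2$.

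The key step is to write $F(x)$ through the projections $P_j$, and the route is the joint orthogonal decomposition. Combining \eqref{6-9} with \eqref{6-6-0}, $S$ acts on $V$ as $\sum_{\omega\ne0}(|\omega|_1/K)\Pi_\omega$. Each eigenvalue is at least $1/K$, so $P_V\le KS$ as operators. Because $P_V$ is the orthogonal projection onto $V$, we have $K_V(x,x)=\langle P_V\delta_x,\delta_x\rangle/\mu(\{x\})^2$ in normalised form; more cleanly, $K_V(x,x)=\norm{P_V k_x}_2^2$ with $k_x:=|\Omega|\1_{\{x\}}$. Then
\[
 K_V(x,x)=\langle P_Vk_x,k_x\rangle\le K\langle Sk_x,k_x\rangle=\sum_{j=1}^K\langle P_jk_x,k_x\rangle=\sum_{j=1}^K K_{H_j}(x,x)\le Kr.
\]
This uses the identity $K_{H_j}(x,x)=\langle P_jk_x,k_x\rangle$, which holds for the same reason as for $P_V$.

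Finally, the Nikolskii inequality \eqref{8-4-Nik} follows from Cauchy--Schwarz:
\[
 |F(x)|=|\langle F,K_V(\cdot,x)\rangle|\le\norm F_2\,K_V(x,x)^{1/2}.
\]
The only delicate point is the operator inequality $P_V\le KS$. It relies on commutativity (through \eqref{6-6-0}), which is why the decomposition is needed. As a fallback that avoids commutativity, I would use the variational bound: writing $F=\sum_j F_j$ with $F_j\in H_j$ does not directly control $\norm{F_j}_2$, so the spectral argument above is the one to use.
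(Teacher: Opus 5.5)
Your proposal is correct and is essentially the paper's proof: the operator inequality $P_V\le KS=\sum_j P_j$ from the joint orthogonal decomposition is tested against $k_x=|\Omega|\1_{\{x\}}$ to give $K_V(x,x)\le\sum_j r_j\le Kr$, and Cauchy--Schwarz then gives the Nikolskii bound. This inequality must hold on all of $L^2(\Omega,\mu)$ because $k_x\notin V$, and it does, since \eqref{6-6-0} and \eqref{6-9} are identities on the whole space; your only deviation is bounding $\dim V$ by subadditivity of dimension instead of integrating $K_V(x,x)$ over $\Omega$, which is equally valid.
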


\begin{proof}
We first claim that
\begin{equation}\label{6-11a}
 \langle P_VF,F\rangle
 \le\sum_{j=1}^K\langle P_jF,F\rangle,
 \qquad F\in L^2(\Omega,\mu).
\end{equation}
Indeed, by \eqref{6-4} and \eqref{6-9},
\begin{align*}
 \langle P_VF,F\rangle
 &=\sum_{\omega\ne0}\norm{\Pi_\omega F}_2^2\\
 &\le\sum_{j=1}^K\sum_{\omega:\,\omega_j=1}
 \norm{\Pi_\omega F}_2^2
 =\sum_{j=1}^K\langle P_jF,F\rangle.
\end{align*}

The reproducing kernel of $H_j$ is
\[
 K_{H_j}(x,y)
 =r_j\sum_{A\in\mathcal P_j}\mathbf1_A(x)\mathbf1_A(y),
\]
and therefore
\begin{equation}\label{6-12-a}
 K_{H_j}(x,x)=r_j,
 \qquad x\in\Omega.
\end{equation}
For $u\in\Omega$, define $e_u:=|\Omega|\mathbf1_{\{u\}}$. Then
\[
 \langle F,e_u\rangle=F(u),
 \qquad F\in L^2(\Omega,\mu),
\]
and
\[
 K_V(u,u)=\langle P_Ve_u,e_u\rangle.
\]
Using \eqref{6-11a} and \eqref{6-12-a}, we obtain
\[
 K_V(u,u)
 \le\sum_{j=1}^K\langle P_je_u,e_u\rangle
 =\sum_{j=1}^Kr_j
 \le Kr.
\]
This proves the first assertion in \eqref{6-10a}.

Let $\varphi_1,\ldots,\varphi_n$ be an orthonormal basis of $V$, where $n=\dim V$. Since
\[
 K_V(x,y)=\sum_{s=1}^n\varphi_s(x)\overline{\varphi_s(y)},
\]
we have
\[
 n=\int_\Omega K_V(x,x)\,d\mu(x)\le Kr.
\]
Finally, the reproducing property and the Cauchy--Schwarz inequality give
\begin{align*}
 |F(x)|
 &\le\norm F_2
 \left(\int_\Omega|K_V(x,y)|^2\,d\mu(y)\right)^{1/2}\\
 &=\norm F_2\sqrt{K_V(x,x)}
 \le\sqrt{Kr}\,\norm F_2.
\end{align*}
Taking the maximum over $x\in\Omega$ proves \eqref{8-4-Nik}.
\end{proof}

For $F,G:\Omega\to\C$, define
\[
 \langle F,G\rangle_m
 :=\frac1m\sum_{i=1}^mF(u_i)\overline{G(u_i)},
 \qquad
 \norm F_{L^2(\Omega_m)}:=\langle F,F\rangle_m^{1/2}.
\]
We first record the empirical $L^2$ estimate needed below, which follows directly from Lemma \ref{lem:leverage} and the Matrix Chernoff inequality \cite[Corollary~5.2]{Tropp} (see, for instance, \cite{DPTT} and \cite[Theorem 3.2]{DKT}).

\begin{lemma}[Matrix Chernoff estimate]\label{lem:chernoff}
Let $0<\delta\le1/2$, $\eta\in(0,1)$, and set $R:=Kr$. If
\begin{equation}\label{eq:mchernoff}
 m\ge3R\delta^{-2}\log(2R/\eta),
\end{equation}
then, with probability at least $1-\eta$,
\begin{equation}\label{eq:frame}
 (1-\delta)\norm F_2^2
 \le\norm F_{L^2(\Omega_m)}^2
 \le(1+\delta)\norm F_2^2,
 \qquad F\in V.
\end{equation}
\end{lemma}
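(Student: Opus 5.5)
The plan is to apply the matrix Chernoff inequality \cite[Corollary~5.2]{Tropp} to the empirical Gram matrix of an orthonormal basis of $V$.

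Let $n:=\dim V$, so that $n\le R=Kr$ by Lemma~\ref{lem:leverage}, and let $\varphi_1,\ldots,\varphi_n$ be an orthonormal basis of $V$ in $L^2(\Omega,\mu)$. For each sample point $u_i$, define the random vector $\Phi(u_i):=(\varphi_1(u_i),\ldots,\varphi_n(u_i))\in\C^n$ and the random positive semidefinite $n\times n$ matrix
\[
 Y_i:=\frac1m\Phi(u_i)\Phi(u_i)^*.
\]
The matrices $Y_1,\ldots,Y_m$ are independent and identically distributed, since the $u_i$ are i.i.d.\ uniform on $\Omega$. By orthonormality of the basis,
\[
 \EE\sum_{i=1}^mY_i=I_n .
\]
Moreover, each summand is bounded in operator norm:
\[
 \|Y_i\|=\frac1m|\Phi(u_i)|^2=\frac1mK_V(u_i,u_i)\le\frac Rm,
\]
where the last inequality is Lemma~\ref{lem:leverage}.

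Next we identify the matrix sum with the empirical norm. Write $F=\sum_sa_s\varphi_s\in V$ with coefficient vector $a\in\C^n$. Then
\[
 \norm F_2^2=|a|^2
 \qquad\text{and}\qquad
 \norm F_{L^2(\Omega_m)}^2=a^*\Bigl(\sum_iY_i\Bigr)a .
\]
It follows that \eqref{eq:frame} is equivalent to the two-sided eigenvalue bound
\[
 \lambda_{\min}\Bigl(\sum_iY_i\Bigr)\ge1-\delta
 \qquad\text{and}\qquad
 \lambda_{\max}\Bigl(\sum_iY_i\Bigr)\le1+\delta .
\]

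We now apply matrix Chernoff with $\mu_{\min}=\mu_{\max}=1$ and uniform bound $L=R/m$. It gives
\[
 \Prb\Bigl(\lambda_{\min}\le1-\delta\Bigr)\le n\Bigl(\frac{e^{-\delta}}{(1-\delta)^{1-\delta}}\Bigr)^{m/R}\le n\,e^{-\delta^2m/(2R)},
\]
\[
 \Prb\Bigl(\lambda_{\max}\ge1+\delta\Bigr)\le n\Bigl(\frac{e^{\delta}}{(1+\delta)^{1+\delta}}\Bigr)^{m/R}\le n\,e^{-\delta^2m/(3R)} .
\]
The second simplification uses the elementary bound $(1+\delta)\log(1+\delta)-\delta\ge\delta^2/3$, valid for $0<\delta\le1$. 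Adding the two tail bounds, the failure probability is at most
\[
 2n\,e^{-\delta^2m/(3R)}\le 2R\,e^{-\delta^2m/(3R)} .
\]
Under \eqref{eq:mchernoff} this is at most $\eta$, which proves the lemma.

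The only points that need care are the following. First, $V$ is a complex space, so Chernoff must be applied to Hermitian matrices; \cite{Tropp} covers this case. Second, the case $n=0$ is trivial. Third, repeated sample points cause no difficulty, because multiplicities are retained in the empirical sums, exactly as the definition of $\langle\cdot,\cdot\rangle_m$ requires. The substantive input is the uniform leverage bound $K_V(x,x)\le Kr$ from Lemma~\ref{lem:leverage}; this is what makes the sample size linear in $R$ up to the logarithmic factor.
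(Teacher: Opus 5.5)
Your proof is correct and is exactly the argument the paper intends: the paper simply cites Lemma~\ref{lem:leverage} together with Tropp's matrix Chernoff inequality, and you have supplied the standard details. These are the Gram matrix of an orthonormal basis, the leverage bound $\norm{Y_i}\le R/m$, the dimension bound $n\le R$, and the elementary simplifications of the two tails that produce the constant $3$. One cosmetic point: with $F=\sum_s a_s\varphi_s$ one actually has $\norm F_{L^2(\Omega_m)}^2=\bar a^{*}\bigl(\sum_iY_i\bigr)\bar a$ rather than $a^{*}\bigl(\sum_iY_i\bigr)a$ (equivalently, use $\overline{\Phi(u_i)}\Phi(u_i)^{T}$ in place of $\Phi(u_i)\Phi(u_i)^{*}$); this is harmless because $a\mapsto\bar a$ is an isometric bijection of $\C^n$.
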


%

We refer to the event in \eqref{eq:frame} as the \emph{frame event} with constant $\delta$. On this event,
\begin{equation}\label{6-16a}
 \bigl|\langle F,G\rangle_m-\langle F,G\rangle\bigr|
 \le\delta\norm F_2\norm G_2,
 \qquad F,G\in V.
\end{equation}
Indeed, let $\mathcal G_m:V\to V$ be determined by
\[
 \langle\mathcal G_mF,G\rangle=\langle F,G\rangle_m,
 \qquad F,G\in V.
\]
Then \eqref{eq:frame} is equivalent to
\[
 \norm{\mathcal G_m-I_V}_{2\to2}\le\delta,
\]
which immediately implies \eqref{6-16a}.

\begin{lemma}\label{lem:hit}
If the frame event \eqref{eq:frame} occurs, then every cell of every partition $\mathcal P_j$ contains at least one of the sample points $u_1,\ldots,u_m$.
\end{lemma}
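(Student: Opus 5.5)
The plan is to test the frame inequality \eqref{eq:frame} on indicator functions of cells. Fix $j$ and a cell $A\in\mathcal P_j$, and set $F:=\mathbf1_A$. Since $\mathbf1_A\in H_j\subset V$, the lower frame inequality applies to $F$.

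First, compute the continuous norm. Regularity of $\mathcal P_j$ gives
\[
 \norm{\mathbf1_A}_2^2=\mu(A)=\frac1{r_j}>0 .
\]

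Second, compute the empirical norm:
\[
 \norm{\mathbf1_A}_{L^2(\Omega_m)}^2=\frac1m\,\bigl|\{i:u_i\in A\}\bigr|,
\]
where sample points are counted with multiplicity.

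Now argue by contradiction. Suppose no sample point lies in $A$. Then the empirical norm is $0$, and the lower frame bound gives
\[
 0\ge(1-\delta)\norm{\mathbf1_A}_2^2=\frac{1-\delta}{r_j}.
\]
Because $\delta\le 1/2$, the right-hand side is at least $\frac{1}{2r_j}>0$, which is a contradiction. Hence every cell $A$ of every $\mathcal P_j$ contains at least one $u_i$.

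Actually the argument gives more than the statement asks. It shows $|\{i:u_i\in A\}|\ge (1-\delta)m/r_j$, so each cell receives at least $(1-\delta)m/r_j$ sample points. This quantitative version may be useful later, when empirical conditional expectations are formed.

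There is essentially no obstacle here. The only point to verify is that $\mathbf1_A$ really belongs to $V$, so that the frame event applies to it. This holds because $\mathbf1_A\in H_j$ by the definition of $H_j$.
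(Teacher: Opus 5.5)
Your proof is correct and is essentially the paper's argument: apply the lower frame bound to $\mathbf 1_A\in H_j\subset V$ to get $\frac1m\,\lvert\{i:u_i\in A\}\rvert\ge(1-\delta)\mu(A)>0$. Your quantitative refinement $\lvert\{i:u_i\in A\}\rvert\ge(1-\delta)m/r_j$ is also valid, and only $\delta<1$ is actually needed; the paper uses just the positivity.
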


\begin{proof}
Let $A\in\mathcal P_j$. Since $\mathbf1_A\in H_j\subset V$ and $\norm{\mathbf1_A}_2^2=\mu(A)>0$, the lower estimate in \eqref{eq:frame} gives
\[
 \frac1m\sum_{i=1}^m\mathbf1_A(u_i)
 =\norm{\mathbf1_A}_{L^2(\Omega_m)}^2
 \ge(1-\delta)\mu(A)>0.
\]
Thus $u_i\in A$ for at least one $i$.
\end{proof}

For the remainder of this subsection, we fix the random points $u_1, \ldots, u_m$ for which  the frame event occurs. For $A\subset\Omega$, set
\[
 N_A:=\bigl|\{1\le i\le m:u_i\in A\}\bigr|.
\]
By Lemma~\ref{lem:hit}, $N_A\ge1$ whenever $A$ is a cell of one of the partitions $\mathcal P_j$. We may therefore define
\[
 \widehat P_jF
 :=\sum_{A\in\mathcal P_j}
 \left(\frac1{N_A}\sum_{i:\,u_i\in A}F(u_i)\right)\mathbf1_A,
 \qquad 1\le j\le K,
\]
and
\[
 \widehat S:=\frac1K\sum_{j=1}^K\widehat P_j.
\]
We regard $\widehat P_j$ and $\widehat S$ as operators on $V$.

\begin{lemma}\label{lem:emp}
Assume that the frame event \eqref{eq:frame} occurs with $0<\delta\le1/2$. Then the following statements hold.
\begin{enumerate}[\rm (i)]
 \item The form $\langle\cdot,\cdot\rangle_m$ is an inner product on $V$, and $\widehat P_j|_V$ is the orthogonal projection of $(V,\langle\cdot,\cdot\rangle_m)$ onto $H_j$.
 \item For every $F\in V$,
 \[
  \norm{\widehat P_jF}_\infty\le\norm F_{\Omega_m},
  \qquad
  \norm{\widehat SF}_\infty\le\norm F_{\Omega_m}.
 \]
 Consequently,
 \[
  \norm{\widehat P_j}_{\infty\to\infty}\le1,
  \qquad
  \norm{\widehat S}_{\infty\to\infty}\le1.
 \]
 \item For every $F\in V$,
 \begin{equation}\label{6-17c}
  \norm{(\widehat P_j-P_j)F}_2\le2\delta\norm F_2.
 \end{equation}
 Consequently,
 \[
  \norm{\widehat S-S}_{2\to2}\le2\delta,
  \qquad
  \norm{\widehat S}_{2\to2}\le2.
 \]
\end{enumerate}
\end{lemma}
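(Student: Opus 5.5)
Part (i) comes first. On the frame event, \eqref{eq:frame} gives $\norm F_{L^2(\Omega_m)}^2\ge(1-\delta)\norm F_2^2>0$ for every nonzero $F\in V$, so $\langle\cdot,\cdot\rangle_m$ is positive definite on $V$. To identify $\widehat P_j$, we check three things.
\begin{enumerate}[\rm (a)]
 \item $\widehat P_jF\in H_j$, which is clear from the definition.
 \item $\widehat P_j$ fixes $H_j$: if $F=\sum_A c_A\1_A$, then each empirical cell average equals $c_A$. This uses $N_A\ge1$, from Lemma~\ref{lem:hit}.
 \item $F-\widehat P_jF$ is $\langle\cdot,\cdot\rangle_m$-orthogonal to every $\1_A$, $A\in\mathcal P_j$. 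This is the computation
 \[
  \frac1m\sum_{i:\,u_i\in A}\bigl(F(u_i)-\tfrac1{N_A}\textstyle\sum_{k:\,u_k\in A}F(u_k)\bigr)=0.
 \]
\end{enumerate}
Together, (a)--(c) show that $\widehat P_j|_V$ is the $\langle\cdot,\cdot\rangle_m$-orthogonal projection onto $H_j$.

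Part (ii) is immediate from the formula. Each value of $\widehat P_jF$ is an average of values $F(u_i)$ at sample points, so $\norm{\widehat P_jF}_\infty\le\norm F_{\Omega_m}\le\norm F_\infty$. Averaging over $j$ gives the same bound for $\widehat S$.

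Part (iii) is the main step. The plan is to compare the two orthogonal projections onto the same subspace $H_j$, taken with respect to the nearby inner products on $V$. Fix $F\in V$ and set $G:=(\widehat P_j-P_j)F\in H_j$. Since $P_jF\in H_j$, orthogonality of $F-P_jF$ to $H_j$ in $\langle\cdot,\cdot\rangle$ gives
\[
 \norm G_2^2=\langle \widehat P_jF-F,G\rangle .
\]
Orthogonality of $F-\widehat P_jF$ to $H_j$ in $\langle\cdot,\cdot\rangle_m$ gives $\langle \widehat P_jF-F,G\rangle_m=0$. Subtracting and applying \eqref{6-16a}, which is legitimate because both $\widehat P_jF-F$ and $G$ lie in $V$, we get
\[
 \norm G_2^2\le\delta\,\norm{F-\widehat P_jF}_2\,\norm G_2 .
\]
The remaining task is the bound $\norm{F-\widehat P_jF}_2\le2\norm F_2$. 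For this, use that $\widehat P_j$ is an orthogonal projection in the empirical norm, so $\norm{F-\widehat P_jF}_{L^2(\Omega_m)}\le\norm F_{L^2(\Omega_m)}$. Converting with \eqref{eq:frame} gives
\[
 \norm{F-\widehat P_jF}_2\le\sqrt{\tfrac{1+\delta}{1-\delta}}\,\norm F_2\le\sqrt3\,\norm F_2\le2\norm F_2 ,
\]
since $\delta\le1/2$. This yields \eqref{6-17c}.

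Averaging \eqref{6-17c} over $j$ gives $\norm{\widehat S-S}_{2\to2}\le2\delta$. Finally, $\norm S_{2\to2}\le1$ because $S$ is an average of orthogonal projections, so $\norm{\widehat S}_{2\to2}\le1+2\delta\le2$.
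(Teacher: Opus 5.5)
Your proposal is correct and follows essentially the paper's argument. Part (ii) is identical, and part (iii) uses the same device: the $L^2$-orthogonality of $F-P_jF$ and the empirical orthogonality of $F-\widehat P_jF$ to $H_j$, combined with \eqref{6-16a}. The differences are only cosmetic. In (i) the paper exhibits the $\langle\cdot,\cdot\rangle_m$-orthonormal basis $\sqrt{m/N_A}\,\1_A$ of $H_j$ rather than checking the projection properties. In (iii) the paper expands $\norm G_{L^2(\Omega_m)}^2$, then uses the lower frame bound on $G$ and $\norm{F-P_jF}_2\le\norm F_2$, obtaining the constant $\delta/(1-\delta)$. You instead expand $\norm G_2^2$ and pay the frame comparison on $F-\widehat P_jF$, obtaining $\delta\sqrt{(1+\delta)/(1-\delta)}$; both are at most $2\delta$ when $\delta\le1/2$.
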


\begin{proof}
The lower estimate in \eqref{eq:frame} shows that $\langle\cdot,\cdot\rangle_m$ is positive definite on $V$. Hence, $\langle\cdot,\cdot\rangle_m$  is an inner product on $V$.  For each $A\in \cP_j$, let $\phi_A=\sqrt{\f m {N_A}}\1_A$. A straightforward calculation shows that $\{ \phi_A:\  A\in\cP_j\}$
is an orthonormal basis of $(H_j, \langle\cdot,\cdot\rangle_m)$, and
\[ \widehat P_jF= \sum_{A\in\cP_j} \langle F,\phi_A\rangle_m\phi_A\in H_j.\]
Hence, $\widehat P_j|_V$ is the orthogonal projection of $(V,\langle\cdot,\cdot\rangle_m)$ onto $H_j$.
This proves (i).

If $x\in A\in\mathcal P_j$, then
\[
 |\widehat P_jF(x)|
 =\left|\frac1{N_A}\sum_{i:\,u_i\in A}F(u_i)\right|
 \le\norm F_{\Omega_m}.
\]
Taking the maximum over $x$ proves the first estimate in (ii), and averaging over $j$ gives the estimate for $\widehat S$. Since $\norm F_{\Omega_m}\le\norm F_\infty$, the operator-norm estimates follow.

It remains to prove (iii). Put
\[
 G:=(\widehat P_j-P_j)F\in H_j.
\]
The lower frame estimate gives
\begin{equation}\label{6-18b}
 (1-\delta)\norm G_2^2
 \le\norm G_{L^2(\Omega_m)}^2.
\end{equation}
Write
\[ G=(\widehat P_j F-F) +(F-P_jF).\]
Since $F-\widehat P_jF$ is orthogonal to $H_j$ with respect to $\langle\cdot,\cdot\rangle_m$, while $F-P_jF$ is orthogonal to $H_j$ in $L^2(\Omega,\mu)$, we have
\begin{align}
 \norm G_{L^2(\Omega_m)}^2
 &=\langle F-P_jF,G\rangle_m\notag\\
 &=\langle F-P_jF,G\rangle_m
   -\langle F-P_jF,G\rangle.
 \label{6-18}
\end{align}
Using \eqref{6-16a} and the fact that $P_j$ is an orthogonal projection, we obtain
\[
 \norm G_{L^2(\Omega_m)}^2
 \le\delta\norm{F-P_jF}_2\norm G_2
 \le\delta\norm F_2\norm G_2.
\]
Together with \eqref{6-18b}, this gives
\[
 \norm G_2=\norm{(\widehat P_j-P_j)F}
 \le\frac{\delta}{1-\delta}\norm F_2
 \le2\delta\norm F_2,
\]
which proves \eqref{6-17c}. Averaging over $j$ yields
\[
 \norm{\widehat S-S}_{2\to2}\le2\delta.
\]
Finally, since $\norm S_{2\to2}\le1$, we have
\[
 \norm{\widehat S}_{2\to2}
 \le\norm S_{2\to2}+\norm{\widehat S-S}_{2\to2}
 \le1+2\delta\le2.
\]
\end{proof}

The next lemma transfers the closeness of $\widehat S$ and $S$ to the annihilating polynomial.

\begin{lemma}\label{lem:poly}
Assume that the frame event \eqref{eq:frame} occurs with $0<\delta\le1/2$, and let
\[
 \Lambda:=h_K(\widehat S)
\]
as an operator on $V$. Then
\[
 \norm\Lambda_{2\to2}\le C_K\delta,
 \qquad
 \norm\Lambda_{\infty\to\infty}\le M_K,
 \qquad
 \norm{Q_K(\widehat S)}_{\infty\to\infty}\le M_K.
\]
\end{lemma}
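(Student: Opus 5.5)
The three bounds are proved separately. The two uniform-norm bounds come directly from Lemma~\ref{lem:emp}(ii) and the coefficient estimates of Lemma~\ref{lem:annih}(ii). The $L^2$ bound comes from comparing $h_K(\widehat S)$ with $h_K(S)$, which vanishes on $V$ by Lemma~\ref{lem:annih}(iii).

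\emph{Uniform bounds.} First note that $\widehat S$ maps $V$ into $V$, because each $\widehat P_j$ maps into $H_j\subset V$; hence polynomials in $\widehat S$ are well defined on $V$. Lemma~\ref{lem:emp}(ii) gives $\norm{\widehat S}_{\infty\to\infty}\le1$, so $\norm{\widehat S^{\,j}}_{\infty\to\infty}\le1$ for every $j\ge0$. Writing $\Lambda=\sum_{j=0}^K b_j\widehat S^{\,j}$ and $Q_K(\widehat S)=\sum_{j=0}^{K-1}c_j\widehat S^{\,j}$, the triangle inequality together with Lemma~\ref{lem:annih}(ii) yields
\[
\norm\Lambda_{\infty\to\infty}\le\sum_{j=0}^K|b_j|=M_K,
\qquad
\norm{Q_K(\widehat S)}_{\infty\to\infty}\le\sum_{j=0}^{K-1}|c_j|\le M_K .
\]

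\emph{$L^2$ bound.} By Lemma~\ref{lem:annih}(iii), $h_K(S)F=0$ for $F\in V$. Moreover $S$ maps $V$ into $V$, since each $P_j$ maps $L^2$ onto $H_j\subset V$. Therefore
\[
\Lambda=h_K(\widehat S)-h_K(S)=\sum_{j=1}^K b_j\bigl(\widehat S^{\,j}-S^j\bigr)
\quad\text{on }V.
\]
Telescope each difference:
\[
\widehat S^{\,j}-S^j=\sum_{i=0}^{j-1}\widehat S^{\,i}(\widehat S-S)S^{j-1-i}.
\]
By Lemma~\ref{lem:emp}(iii), $\norm{\widehat S}_{2\to2}\le2$ and $\norm{\widehat S-S}_{2\to2}\le2\delta$, and $\norm S_{2\to2}\le1$. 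Hence
\[
\norm{\widehat S^{\,j}-S^j}_{2\to2}\le 2\delta\sum_{i=0}^{j-1}2^i\le 2^{j+1}\delta .
\]
Summing over $j$ gives
\[
\norm\Lambda_{2\to2}\le\sum_{j=1}^K|b_j|\,2^{j+1}\delta\le 2^{K+1}M_K\delta=C_K\delta .
\]

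The only delicate point is that every operator involved must act on $V$ as a common domain, so that the norm estimates of Lemma~\ref{lem:emp}, which are stated on $V$, can be composed. This holds because both $S$ and $\widehat S$ leave $V$ invariant.
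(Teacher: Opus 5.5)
Your proposal is correct and follows essentially the same route as the paper. Both arguments write $\Lambda=h_K(\widehat S)-h_K(S)$ on $V$, telescope $\widehat S^{\,j}-S^j$ using $\norm{\widehat S}_{2\to2}\le2$, $\norm S_{2\to2}\le1$ and $\norm{\widehat S-S}_{2\to2}\le2\delta$, and then bound the uniform norms through $\norm{\widehat S}_{\infty\to\infty}\le1$ and the coefficient sums $\sum_{j}|b_j|=M_K$. Your explicit check that $S$ and $\widehat S$ leave $V$ invariant is a point the paper uses implicitly.
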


\begin{proof}
Set $A:=\widehat S$ and $B:=S$, regarded as operators on $V$. By Lemma~\ref{lem:emp},
\[
 \norm A_{2\to2}\le2,
 \qquad
 \norm B_{2\to2}\le1,
 \qquad
 \norm{A-B}_{2\to2}\le2\delta.
\]
By Lemma~\ref{lem:annih}(iii), $h_K(B)=0$ on $V$. Therefore,
\begin{equation}\label{6-21a}
 \Lambda=h_K(A)-h_K(B)
 =\sum_{j=1}^Kb_j(A^j-B^j).
\end{equation}
For $1\le j\le K$, the identity
\[
 A^j-B^j
 =\sum_{i=0}^{j-1}A^{j-i-1}(A-B)B^i
\]
gives
\[
 \norm{A^j-B^j}_{2\to2}
 \le2\delta\sum_{i=0}^{j-1}2^{j-i-1}
 \le2^{j+1}\delta.
\]
Substituting this estimate into \eqref{6-21a} and using Lemma~\ref{lem:annih}(ii), we obtain
\[
 \norm\Lambda_{2\to2}
 \le\delta\sum_{j=1}^K2^{j+1}|b_j|
 \le2^{K+1}M_K\delta
 =C_K\delta.
\]

By Lemma~\ref{lem:emp}(ii), $\norm A_{\infty\to\infty}\le1$. It follows that
\[
 \norm\Lambda_{\infty\to\infty}
 \le\sum_{j=0}^K|b_j|\norm A_{\infty\to\infty}^j
 \le M_K,
\]
and similarly,
\[
 \norm{Q_K(A)}_{\infty\to\infty}
 \le\sum_{j=0}^{K-1}|c_j|\norm A_{\infty\to\infty}^j
 \le M_K.
\]
\end{proof}

The final auxiliary lemma converts the small $L^2$ norm of $\Lambda$ into a strict contraction in the uniform norm after a bounded number of iterations.

\begin{lemma}\label{lem:amp}
Let $\theta\in(0,1]$, set $\ell:=\lceil3/\theta\rceil$, and assume that the frame event \eqref{eq:frame} occurs with
\[
 0<\delta\le(C_K\sqrt K)^{-1}r^{-\theta/2}.
\]
If $\Lambda:=h_K(\widehat S)$ on $V$, then
\[
 \norm{\Lambda^\ell}_{\infty\to\infty}
 \le r^{(1-\theta\ell)/2}
 \le r^{-1}
 \le\frac12.
\]
\end{lemma}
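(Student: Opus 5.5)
The plan is to interpolate between the $L^2$ bound $\norm\Lambda_{2\to2}\le C_K\delta$ and the uniform bound $\norm\Lambda_{\infty\to\infty}\le M_K$ from Lemma~\ref{lem:poly}, using the Nikolskii inequality \eqref{8-4-Nik} to move from $L^2$ back to $L^\infty$. Since $\Lambda$ maps $V$ into $V$, the natural approach is to apply the uniform bound only once and spend all remaining factors of $\Lambda$ in $L^2$, where each factor gains a small constant.

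Fix $F\in V$. Because $\Lambda^{\ell}F\in V$, the Nikolskii inequality \eqref{8-4-Nik} gives
\[
 \norm{\Lambda^\ell F}_\infty\le\sqrt{Kr}\,\norm{\Lambda^\ell F}_2\le\sqrt{Kr}\,(C_K\delta)^{\ell}\norm F_2\le\sqrt{Kr}\,(C_K\delta)^{\ell}\norm F_\infty ,
\]
where we used $\norm F_2\le\norm F_\infty$ for the probability measure $\mu$. With $\delta\le(C_K\sqrt K)^{-1}r^{-\theta/2}$ we have $C_K\delta\le K^{-1/2}r^{-\theta/2}$, so
\[
 \norm{\Lambda^\ell}_{\infty\to\infty}\le \sqrt K\,r^{1/2}\,K^{-\ell/2}r^{-\theta\ell/2}\le r^{(1-\theta\ell)/2},
\]
using $K^{(1-\ell)/2}\le1$ since $\ell\ge1$. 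This argument does not even need the $M_K$ bound. The stated exponent $r^{(1-\theta\ell)/2}$ is exactly what this pure-$L^2$ chain produces, which confirms that this is the intended route.

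The remaining inequalities are elementary. Since $\ell=\lceil3/\theta\rceil$, we have $\theta\ell\ge3$, so $(1-\theta\ell)/2\le-1$, and because $r\ge2$ this gives $r^{(1-\theta\ell)/2}\le r^{-1}\le1/2$.

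The only point needing care is the first inequality. It requires that the $2\to2$ bound of Lemma~\ref{lem:poly} apply to iterates, which is just submultiplicativity, $\norm{\Lambda^\ell}_{2\to2}\le\norm\Lambda_{2\to2}^\ell$. It also requires that the iterates stay in $V$ so that \eqref{8-4-Nik} applies. The latter holds because $\widehat S$ maps $V$ into $H_1+\cdots+H_K=V$. No genuine obstacle is expected.
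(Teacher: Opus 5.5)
Your proof is correct and essentially identical to the paper's: you apply \eqref{8-4-Nik} to $\Lambda^\ell F\in V$, use $\norm{\Lambda^\ell}_{2\to2}\le (C_K\delta)^\ell$ from Lemma~\ref{lem:poly} and $\norm F_2\le\norm F_\infty$, and finish with $K^{(1-\ell)/2}\le1$, $\theta\ell\ge3$, $r\ge2$; as you noticed, neither the $M_K$ bound nor the ``interpolation'' idea in your first paragraph is needed. The one tacit check, which the paper also leaves implicit, is that the hypothesis on $\delta$ gives $\delta\le 1/2$, so Lemma~\ref{lem:poly} applies; this holds because $C_K=2^{K+1}M_K\ge 8$.
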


\begin{proof}
Let $F\in V$. Lemmas~\ref{lem:leverage} and \ref{lem:poly} give
\[
 \norm{\Lambda^\ell F}_\infty
 \le\sqrt{Kr}\,\norm{\Lambda^\ell F}_2
 \le\sqrt{Kr}\,(C_K\delta)^\ell\norm F_2.
\]
By the assumption on $\delta$,
\[
 C_K\delta\le K^{-1/2}r^{-\theta/2}.
\]
Since $\mu$ is a probability measure, it follows that
\begin{align*}
 \norm{\Lambda^\ell F}_\infty
 &\le K^{(1-\ell)/2}r^{(1-\theta\ell)/2}\norm F_\infty\le r^{(1-\theta\ell)/2}\norm F_\infty.
\end{align*}
Here we used $\ell\ge3$ and $K\ge1$. Since $\theta\ell\ge3$ and $r\ge2$, we have
\[
 r^{(1-\theta\ell)/2}\le r^{-1}\le\frac12.
\]
\end{proof}
\begin{remark}
An important aspect of the proofs above  is that they do not require the empirical projections $\widehat P_j$ to commute. Their uniform-norm contractivity is sufficient. Commutativity is used for the original projections, to obtain $h_K(S)=0$. 
\end{remark}

\subsection{Proof of Theorem~\ref{thm:abstract}}

Set
\[
 \delta:=(C_K\sqrt K)^{-1}r^{-\theta/2}.
\]
Then $0<\delta\le1/2$. By Lemma~\ref{lem:chernoff}, the frame event with constant $\delta$ has probability at least $1-\eta$ provided that
\begin{align*}
 m
 &\ge 3Kr\,\delta^{-2}\log(2Kr/\eta)=3C_K^2K^2r^{1+\theta}\log(2Kr/\eta),
\end{align*}
which is precisely \eqref{eq:samplesize}.

Assume henceforth that the frame event occurs. First, recall that $h_K(S)=0$ on $V$, so we may write every $F\in V$ in the form  $F=(I-h_K(S))F$.
 Put
\[
 \Lambda:=h_K(\widehat S).
\]
Since $1-h_K(z)=zQ_K(z)$, Lemma~\ref{lem:annih}(i) gives, for every $F\in V$,
\begin{equation}\label{6-20}
 (I-\Lambda)F
 =\bigl(I-h_K(\widehat S)\bigr)F
 =Q_K(\widehat S)\widehat S F.
\end{equation}

We claim that $I-\Lambda$ is invertible on the normed space $(V,\norm\cdot_\infty)$ and that
\begin{equation}\label{6-21}
 \norm{(I-\Lambda)^{-1}}_{\infty\to\infty}
 \le 2\ell M_K^{\ell-1}.
\end{equation}

Assuming the claim for the moment, we obtain  from \eqref{6-20} that
\[
 F=(I-\Lambda)^{-1}Q_K(\widehat S)\widehat S F.
\]
This combined with \eqref{6-21} and Lemmas~\ref{lem:poly} and \ref{lem:emp}(ii), gives
\begin{align*}
 \norm F_\infty
 &\le 2\ell M_K^{\ell-1}
 \norm{Q_K(\widehat S)\widehat S F}_\infty\le 2\ell M_K^\ell\norm{\widehat S F}_\infty\le 2\ell M_K^\ell\norm F_{\Omega_m}.
\end{align*}
Since $F\in V$ was arbitrary, we  prove the asserted norming inequality with
\[
 B(K,\theta)=2\ell M_K^\ell.
\]
Moreover, since $M_K\le4^K$ and $\ell\le4/\theta$, we have
\[
 \log \, B(K,\theta)
 \le \log \Bl[\frac8\theta\,4^{4K/\theta}\Br]=\log\frac8\theta+\frac{4K\log4}{\theta}
 \le \frac{CK}{\theta},
\]
where $C>0$ is an absolute constant.

It remains to prove the claim \eqref{6-21}. By Lemma~\ref{lem:amp},
\[
 \norm{\Lambda^\ell}_{\infty\to\infty}\le\frac12.
\]
Hence $I-\Lambda^\ell$ is invertible on $(V,\norm\cdot_\infty)$, and the Neumann series gives
\[
 \norm{(I-\Lambda^\ell)^{-1}}_{\infty\to\infty}
 \le \sum_{j=0}^\infty
 \norm{\Lambda^\ell}_{\infty\to\infty}^{\,j}
 \le 2.
\]
Since
\[
 I-\Lambda^\ell
 =(I-\Lambda)(I+\Lambda+\cdots+\Lambda^{\ell-1}),
\]
and the two factors on the right-hand side commute, the operator $I-\Lambda$ is also invertible and
\[
 (I-\Lambda)^{-1}
 =(I+\Lambda+\cdots+\Lambda^{\ell-1})
 (I-\Lambda^\ell)^{-1}.
\]
By Lemma~\ref{lem:poly},
\[
 \norm{\Lambda}_{\infty\to\infty}\le M_K.
\]
Therefore,
\begin{align*}
 \norm{(I-\Lambda)^{-1}}_{\infty\to\infty}
 &\le
 \left(\sum_{s=0}^{\ell-1}
 \norm{\Lambda}_{\infty\to\infty}^{\,s}\right)
 \norm{(I-\Lambda^\ell)^{-1}}_{\infty\to\infty}\\
 &\le 2\sum_{s=0}^{\ell-1}M_K^s
 \le 2\ell M_K^{\ell-1}.
\end{align*}
This proves \eqref{6-21} and completes the proof of the theorem.

\section{The grids of $\T^d$  and the de la Vall\'ee Poussin decomposition}

Let $\ell_0\ge1$ be an integer to be chosen later. For $n\in\ZZ_+$ and
$0\le j<\ell_0 2^{n+1}$, set
\[
 I_{n,j}:=\left[\frac{2\pi j}{\ell_0 2^{n+1}},
 \frac{2\pi(j+1)}{\ell_0 2^{n+1}}\right),
\]
and define
\[
 \CD_n:=\Bl\{I_{n,j}:\  \ j=0,1,\ldots, \ell_0 2^{n+1}-1\Br\}.
\]
Thus, $\CD_n$ is a partition of $\T$ into $\ell_0 2^{n+1}$ intervals of equal length. Let $\FF_n$ denote the $\sigma$-algebra generated by $\CD_n$, and let
\[
 P_nf:=\EE[f\mid\FF_n]
 =\sum_{I\in\CD_n}\left(\frac1{\mu_1(I)}\int_I f\,d\mu_1\right)\1_I.
\]
Then $P_n$ is the orthogonal projection of $L^2(\T)$ onto the space of functions that are constant on each interval of $\CD_n$. Since $\CD_n$ refines $\CD_m$ whenever $m\le n$, we have
\begin{equation}\label{7-1a}
 P_nP_m=P_mP_n=P_{m\wedge n},
 \qquad m,n\in\ZZ_+.
\end{equation}

 For a multi-index $\bn=(n_1,\ldots,n_d)\in\ZZ_+^d$, define
\[
 \CD_{\bn}:=\{I_1\times\cdots\times I_d:\  I_j\in\CD_{n_j},\  1\leq j\leq d\}.
\]
Clearly, $\CD_{\bn}$ is a regular partition of $\T^d$ satisfying
\begin{equation}\label{7-2-b}
 |\CD_{\bn}|=(2\ell_0)^d2^{|\bn|_1}
\end{equation}
and
\[
 \mu_d(Q)=\frac1{(2\ell_0)^d2^{|\bn|_1}}
 =\frac1{|\CD_{\bn}|},
 \qquad Q\in\CD_{\bn}.
\]
For $\bm,\bn\in\ZZ_+^d$, we write $\bm\preceq\bn$ if $m_j\le n_j$ for all $1\le j\le d$. If $\bm\preceq\bn$, then $\CD_{\bn}$ refines $\CD_{\bm}$. Let $\FF_{\bn}$ be the $\sigma$-algebra generated by $\CD_{\bn}$, and put
\[
 P_{\bn}f:=\EE[f\mid\FF_{\bn}]
 =\sum_{Q\in\CD_{\bn}}\left(\frac1{\mu_d(Q)}\int_Qf\,d\mu_d\right)\1_Q.
\]
Since
\[
 P_{\bn}=P_{n_1}^{(1)}\cdots P_{n_d}^{(d)},
\]
it follows from \eqref{7-1a} that
\begin{equation}\label{7-3}
 P_{\bn}P_{\bm}=P_{\bm}P_{\bn}=P_{\bm\wedge\bn},
 \qquad \bm,\bn\in\ZZ_+^d,
\end{equation}
where
\[
 \bm\wedge\bn
 :=\bigl(\min\{m_1,n_1\},\ldots,\min\{m_d,n_d\}\bigr).
\]

We next introduce the dyadic de la Vall\'ee Poussin decomposition. For $f,g\in L^1(\T)$, write
\[
 (f*g)(x):=\int_{\T}f(x-t)g(t)\,d\mu_1(t).
\]
Let $K_n$ be the de la Vall\'ee Poussin kernel defined by (2.1). Its Fourier coefficients are given by
\begin{equation}\label{eq:Kn-Fourier}
 \widehat K_n(k)=
 \begin{cases}
  1, & |k|\le n,\\[2mm]
  2-\dfrac{|k|}{n}, & n<|k|<2n,\\[2mm]
  0, & |k|\ge2n.
 \end{cases}
\end{equation}
For $n\in\ZZ_+$, define the operator $\Psi_n$ by
\begin{equation}\label{eq:Psi-one-dimensional}
 \Psi_0f:=f*K_1,
 \qquad
 \Psi_nf:=f*(K_{2^n}-K_{2^{n-1}}),\quad n\ge1.
\end{equation}
For later use, set
\[
 J_0:=\{-1,0,1\},
 \qquad
 J_n:=\{k\in\ZZ:2^{n-1}<|k|<2^{n+1}\},\quad n\ge1.
\]
By \eqref{eq:Kn-Fourier}, the spectrum of $\Psi_nf$ is contained in $J_n$.

For $\bn=(n_1,\ldots,n_d)\in\ZZ_+^d$, define
\begin{equation}\label{eq:Psi-multi}
 \Psi_{\bn}f
 :=\Psi_{n_1}^{(1)}\cdots\Psi_{n_d}^{(d)}f,
 \qquad
 J_{\bn}:=J_{n_1}\times\cdots\times J_{n_d}.
\end{equation}
Throughout the remainder of this section, we assume that $N\ge8$ and set
\[
 L:=\lceil\log_2N\rceil,
 \qquad
 [L]_0:=\{0,1,\ldots,L\}.
\]
Thus, $L\ge3$ and $2^{L-1}<N\le2^L$.

The following lemma collects some well-known results in classical analysis.

\begin{lemma}\label{lem:dlvp}
The following statements hold.
\begin{enumerate}[\rm (i)]
 \item For every $f\in\cT([-N,N]^d)$,
 \[
  f=\sum_{\bn\in[L]_0^d}\Psi_{\bn}f.
 \]
 \item If $\bn\in\ZZ_+^d$ and $f\in L^1(\T^d)$, then $\Psi_{\bn}f$ is a trigonometric polynomial with spectrum contained in $J_{\bn}$. In particular,
 \[
  \norm{\partial_j\Psi_{\bn}f}_\infty
  \le2^{n_j+1}\norm{\Psi_{\bn}f}_\infty,
  \qquad 1\le j\le d.
 \]
 \item There exists an absolute constant $A\ge1$ such that, for every $1\le p\le\infty$,
 \[
  \sup_{\bn\in\ZZ_+^d}\norm{\Psi_{\bn}f}_p
  \le A^d\norm f_p,
  \qquad f\in L^p(\T^d).
 \]
 In fact, one can choose here  $A=6$.
\end{enumerate}
\end{lemma}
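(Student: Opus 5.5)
Everything reduces to the one-dimensional case, because $\Psi_{\bn}$ is the tensor product $\Psi_{n_1}^{(1)}\cdots\Psi_{n_d}^{(d)}$ and convolution in separate variables commutes.

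For (i), I would first check the telescoping identity in one variable. Write $\Psi_0+\cdots+\Psi_L$ as convolution with $K_1+\sum_{n=1}^L(K_{2^n}-K_{2^{n-1}})=K_{2^L}$. By \eqref{eq:Kn-Fourier}, $\widehat K_{2^L}(k)=1$ for $|k|\le 2^L$. Since $N\le 2^L$, we get $\sum_{n=0}^L\Psi_n g=g$ for every $g\in\cT([-N,N])$. Applying this successively in each variable to $f\in\cT([-N,N]^d)$ gives $f=\sum_{\bn\in[L]_0^d}\Psi_{\bn}f$. Here we use that $\Psi_n^{(j)}$ preserves $\cT([-N,N]^d)$, since it only multiplies Fourier coefficients.

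For (ii), I would compute the multiplier of $\Psi_n$ from \eqref{eq:Kn-Fourier}. For $n\ge1$ it is $\widehat K_{2^n}(k)-\widehat K_{2^{n-1}}(k)$. This vanishes when $|k|\le 2^{n-1}$, because both coefficients equal $1$ there. It also vanishes when $|k|\ge 2^{n+1}$, because both are $0$. Hence the spectrum lies in $J_n$. For $n=0$ the multiplier of $K_1$ is supported in $|k|<2$, that is, in $J_0$. Taking tensor products puts the spectrum of $\Psi_{\bn}f$ in $J_{\bn}$. The derivative bound then follows from Bernstein's inequality in the $j$th variable: for fixed other variables, $\Psi_{\bn}f$ is a trigonometric polynomial of degree $<2^{n_j+1}$ in $x_j$, so $\norm{\partial_j g}_\infty\le 2^{n_j+1}\norm{g}_\infty$ holds pointwise in the remaining variables.

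For (iii), I would use Young's inequality: convolution with a kernel $\phi$ has norm at most $\norm{\phi}_{L^1(\T)}$ on every $L^p$, $1\le p\le\infty$. In the multivariate setting, each $\Psi_{n_j}^{(j)}$ acting in one variable has the same bound on $L^p(\T^d)$, by Minkowski's integral inequality. It therefore suffices to bound $\norm{K_n}_1$ uniformly. Since Fej\'er kernels are nonnegative with integral $1$, we get $\norm{K_n}_1\le 2\norm{F_{2n-1}}_1+\norm{F_{n-1}}_1=3$. Consequently $\norm{K_{2^n}-K_{2^{n-1}}}_1\le 6$ and $\norm{K_1}_1\le 3$, which gives $A=6$. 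Composing the $d$ one-variable operators yields $A^d$.

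None of these steps is a real obstacle. The only point needing care is the boundary case $|k|=2^{n-1}$ or $2^{n+1}$ in the spectrum claim, and the case $n=1$ where $K_{2^0}=K_1$; both are handled by the exact formula \eqref{eq:Kn-Fourier}.
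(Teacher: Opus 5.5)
Your proof is correct. It is the standard argument the paper has in mind: the paper gives no proof of this lemma and simply calls these facts classical. Your four steps are telescoping to $K_{2^L}$, reading off the multipliers from \eqref{eq:Kn-Fourier}, applying Bernstein's inequality in the $j$th variable, and applying Young's inequality with $\norm{K_n}_1\le 3$; together they recover exactly the stated constant $A=6$.
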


\begin{lemma}\label{lem:cellwise}
Let $\bn\in\ZZ_+^d$, and suppose that $\bx,\by$ belong to the same box of $\CD_{\bn}$. Then, for every $f\in C(\T^d)$,
\[
 \abs{\Psi_{\bn}f(\bx)-\Psi_{\bn}f(\by)}
 \le\frac{2\pi dA^d}{\ell_0}\norm f_\infty.
\]
\end{lemma}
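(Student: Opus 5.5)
The estimate comes from the mean value theorem along a path inside the box, combined with the Bernstein-type bound in Lemma~\ref{lem:dlvp}(ii) and the uniform boundedness in Lemma~\ref{lem:dlvp}(iii). Set $g:=\Psi_{\bn}f$. By Lemma~\ref{lem:dlvp}(ii), $g$ is a trigonometric polynomial, hence smooth, and
\[
 \norm{\partial_jg}_\infty\le2^{n_j+1}\norm g_\infty,\qquad 1\le j\le d.
\]
Lemma~\ref{lem:dlvp}(iii) with $p=\infty$ gives $\norm g_\infty\le A^d\norm f_\infty$.

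Next, join $\bx$ to $\by$ inside the box. Let $Q=I_1\times\cdots\times I_d\in\CD_{\bn}$ contain both points, with $I_j\in\CD_{n_j}$. Each $I_j$ is an interval, not wrapping around $\T$, of length $2\pi/(\ell_0 2^{n_j+1})$. So $|x_j-y_j|\le 2\pi/(\ell_0 2^{n_j+1})$ as real numbers in $[0,2\pi)$. I would move one coordinate at a time through the intermediate points
\[
 \bz^{(0)}=\bx,\qquad \bz^{(j)}=(y_1,\ldots,y_j,x_{j+1},\ldots,x_d),\qquad \bz^{(d)}=\by.
\]
Consecutive points differ only in the $j$th coordinate, and the segment between them stays in $Q$ (convexity of $Q$ is not even needed, only that the straight segment in the $j$th variable makes sense). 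The one-variable mean value theorem, applied to real and imaginary parts separately, or to $g$ directly via $|g(b)-g(a)|\le|b-a|\sup|g'|$, then gives
\[
 |g(\bz^{(j)})-g(\bz^{(j-1)})|\le|x_j-y_j|\,\norm{\partial_jg}_\infty
 \le\frac{2\pi}{\ell_02^{n_j+1}}\,2^{n_j+1}A^d\norm f_\infty=\frac{2\pi A^d}{\ell_0}\norm f_\infty .
\]

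Finally, sum the $d$ steps with the triangle inequality to get the stated bound $2\pi dA^d\ell_0^{-1}\norm f_\infty$. There is no real obstacle. The one point to check is that the factor $2^{n_j+1}$ in the Bernstein bound exactly cancels the side length $2\pi/(\ell_02^{n_j+1})$ of the cell, which it does because the spectrum of $g$ in the $j$th variable lies in $\{|k|<2^{n_j+1}\}$. This holds for $n_j=0$ as well, where the spectrum is $J_0=\{-1,0,1\}$, the Bernstein factor $1$ is at most $2$, and the side is $2\pi/(2\ell_0)$.
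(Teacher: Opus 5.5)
Your proposal is correct and follows the paper's proof essentially verbatim: the same coordinate-by-coordinate telescoping through $\bz^{(j)}$, the Bernstein bound of Lemma~\ref{lem:dlvp}(ii) exactly cancelling the cell side length $2\pi/(\ell_0 2^{n_j+1})$, and Lemma~\ref{lem:dlvp}(iii) with $p=\infty$. Use the direct inequality $|g(b)-g(a)|\le|b-a|\sup|g'|$, obtained by integrating $g'$, rather than treating real and imaginary parts separately, since the latter would cost a factor $\sqrt2$ in the constant.
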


\begin{proof}
Put $f_{\bn}:=\Psi_{\bn}f$, and define
\[
 \bz^0:=\bx,
 \qquad
 \bz^j:=(y_1,\ldots,y_j,x_{j+1},\ldots,x_d),
 \quad 1\le j\le d.
\]
Since $\bx$ and $\by$ belong to the same box of $\CD_{\bn}$,
\[
 |x_j-y_j|\le\frac{2\pi}{\ell_0 2^{n_j+1}},
 \qquad 1\le j\le d.
\]
Using Lemma~\ref{lem:dlvp}(ii) and (iii), we obtain
\begin{align*}
 |f_{\bn}(\bx)-f_{\bn}(\by)|
 &\le\sum_{j=1}^d
 |f_{\bn}(\bz^{j-1})-f_{\bn}(\bz^j)|
 \le\sum_{j=1}^d\norm{\partial_jf_{\bn}}_\infty|x_j-y_j|\\
 &\le\sum_{j=1}^d2^{n_j+1}\norm{f_{\bn}}_\infty
 \frac{2\pi}{\ell_0 2^{n_j+1}}
 \le\frac{2\pi dA^d}{\ell_0}\norm f_\infty.
\end{align*}
\end{proof}

If $f\in\cT(N,d)$ and $\Psi_{\bn}f$ is not identically zero, then
\[
 \Gamma(N,d)\cap J_{\bn}\ne\emptyset.
\]
The next lemma shows that all active indices belong to a thin simplex.

\begin{lemma}\label{lem:simplex}
If $\bn\in\ZZ_+^d$ and $\Gamma(N,d)\cap J_{\bn}\ne\emptyset$, then
\[
 |\bn|_1<L+d.
\]
\end{lemma}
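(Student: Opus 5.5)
Take $\bk\in\Gamma(N,d)\cap J_{\bn}$ and compare, coordinate by coordinate, the lower bound that $J_{n_j}$ forces on $\max\{|k_j|,1\}$ with the product constraint defining $\Gamma(N,d)$. This is the dyadic analogue of the argument for \eqref{3-7} in the proof of Theorem~\ref{thm-1-1}.

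First, fix $j$ and show that
\[
\max\{|k_j|,1\}>2^{n_j-1}.
\]
If $n_j\ge1$, this holds because $k_j\in J_{n_j}$ means $2^{n_j-1}<|k_j|$. If $n_j=0$, then $\max\{|k_j|,1\}\ge1>2^{-1}=2^{n_j-1}$. So the bound holds in both cases.

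Next, multiply these $d$ strict inequalities and use $\bk\in\Gamma(N,d)$ to get
\[
2^{|\bn|_1-d}<\prod_{j=1}^d\max\{|k_j|,1\}\le N\le 2^L,
\]
where the last step uses $2^{L-1}<N\le2^L$, which follows from $L=\lceil\log_2N\rceil$. Hence $|\bn|_1-d<L$, that is, $|\bn|_1<L+d$.

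No step is a real obstacle. The only points needing care are the case $n_j=0$, where $J_0=\{-1,0,1\}$ contains $0$, so the weight $\max\{|k_j|,1\}$ is what makes the lower bound hold, and keeping the inequalities strict. Strictness is automatic, since each factor satisfies a strict inequality and all quantities are positive, so the product inequality is strict.
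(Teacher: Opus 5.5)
Your proposal is correct and follows essentially the same argument as the paper: the coordinatewise bound $\max\{|k_j|,1\}>2^{n_j-1}$ (with the $n_j=0$ case handled by the weight $\max\{|k_j|,1\}$), then multiplication against $\prod_j\max\{|k_j|,1\}\le N\le 2^L$. Your remark that this mirrors the argument for \eqref{3-7} is also apt.
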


\begin{proof}
Choose $\bk=(k_1,\ldots,k_d)\in\Gamma(N,d)\cap J_{\bn}$. If $n_j\ge1$, then
\[
 2^{n_j-1}<|k_j|<2^{n_j+1},
\]
whereas if $n_j=0$, then $|k_j|\le1$. Thus, in either case,
\[
 2^{n_j-1}<\max\{|k_j|,1\}.
\]
Multiplying these inequalities over $j=1,\ldots,d$ and using $\bk\in\Gamma(N,d)$, we obtain
\[
 2^{|\bn|_1-d}
 <\prod_{j=1}^d\max\{|k_j|,1\}
 \le N\le2^L.
\]
Hence $|\bn|_1<L+d$.
\end{proof}

Define
\[
 M_{L+d}:=\{\bn\in[L]_0^d:|\bn|_1<L+d\},
 \qquad
 S_{L+d}:=\{\bn\in[L]_0^d:|\bn|_1=L+d\}.
\]
For $f\in\cT(N,d)$, put
\[
 \cA(f):=\{\bn\in[L]_0^d:\Psi_{\bn}f\not\equiv0\}.
\]
Lemma~\ref{lem:simplex} implies
\begin{equation}\label{eq:active-simplex}
 \cA(f)\subset M_{L+d}.
\end{equation}

\begin{lemma}\label{lem-7-4}
For every $\bn\in M_{L+d}$, there exists $\bm\in S_{L+d}$ such that
$\bn\preceq\bm$.
\end{lemma}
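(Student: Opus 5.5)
We argue by a greedy filling of coordinates. Fix $\bn\in M_{L+d}$, so $\bn\in[L]_0^d$ and $|\bn|_1<L+d$. Set $s:=L+d-|\bn|_1\ge1$, the deficit we must add. The available room is
\[
 \sum_{j=1}^d(L-n_j)=dL-|\bn|_1 .
\]
Since $L\ge3$ and $d\ge2$, we have $dL\ge L+d$: indeed $dL-L-d=(d-1)(L-1)-1\ge 2\cdot1-1>0$. Hence the room satisfies $dL-|\bn|_1\ge L+d-|\bn|_1=s$, so the deficit can be absorbed without leaving the cube $[L]_0^d$.

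Concretely, we define $\bm$ coordinate by coordinate. Put $s_0:=s$, and for $j=1,\dots,d$ let
\[
 t_j:=\min\{L-n_j,\;s_{j-1}\},\qquad m_j:=n_j+t_j,\qquad s_j:=s_{j-1}-t_j .
\]
Then $n_j\le m_j\le L$ for all $j$, so $\bn\preceq\bm$ and $\bm\in[L]_0^d$.

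It remains to check $s_d=0$. If $s_d>0$, then at every step $t_j=L-n_j$, since otherwise $t_j=s_{j-1}$ forces $s_j=0$ and all later $s_i$ vanish. Summing gives
\[
 s-s_d=\sum_j(L-n_j)=dL-|\bn|_1\ge s,
\]
a contradiction. Hence $\sum_j t_j=s$ and
\[
 |\bm|_1=|\bn|_1+s=L+d,
\]
so $\bm\in S_{L+d}$.

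The only point requiring care is the inequality $dL\ge L+d$, which guarantees that $S_{L+d}$ is nonempty and that the cube $[L]_0^d$ has enough room. This is exactly where the standing assumptions $N\ge8$ (so $L\ge3$) and $d\ge2$ enter. Everything else is elementary bookkeeping.
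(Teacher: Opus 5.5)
Your proof is correct and takes essentially the same route as the paper: both rest on the room count $dL-|\bn|_1\ge L+d-|\bn|_1$, which follows from $L+d<Ld$ when $L\ge3$ and $d\ge2$. Your greedy filling just makes explicit the paper's observation that the sums $|\bm|_1$ over $\bn\preceq\bm\in[L]_0^d$ fill the whole integer range $\{|\bn|_1,\ldots,Ld\}$.
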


\bp
Let   $\bn=(n_1,\ldots, n_d)\in M_{L+d}$.
Then   $0\leq n_j\leq L$ for  $1\leq j\leq d$,  and hence
\begin{align*}
\Bl\{ \bm\in [L]_0^d:\  \ \bn\prec \bm\Br\}&=
\Bl\{ \sum_{j=1}^d (n_j+k_j):\  \ k_j=0,1,\ldots, L-n_j,\  \ 1\leq j\leq d\Bl\} \\
& =\Bl\{ |\bn|_1,  |\bn|_1+1, \ldots, Ld\Br\}. \end{align*}
Since
$|\bn|_1< L+d <Ld$,
 there must exist  $\bm\in S_{L+d}$ such that $\bn\prec \bm$.
 \ep

\section{  Sampling discretization of uniform  norms  by random points}

The main goal of this section is to prove Theorem~\ref{thm:main}. We keep the notation of Section~7. Let
\[
 \bL:=(L,\ldots,L)\in\ZZ_+^d,
 \qquad
 \Gt:=\CD_{\bL}.
\]
Thus, $\Gt$ is a partition of $\T^d$ into
\[
 |\Gt|=(2\ell_0)^d2^{dL}=(\ell_02^{L+1})^d
\]
congruent boxes. Let $\Omega$ be the finite set consisting of the boxes in $\Gt$, equipped with the uniform normalised counting measure $\mu$. We will apply  Theorem~\ref{thm:abstract}  to  the probability  space  $(\Og, \mu)$.
Define
\[
 \pi:\T^d\longrightarrow\Omega
\]
by letting $\pi(\bx)$ be the unique box of $\Gt$ that contains $\bx$.  For each cell $Q\in \Og$ and any $\bx\in\T^d$, we have
\[ \bx\in Q\iff \pi(\bx)= Q,\]
which immediately gives
$$Q=\pi^{-1}(\{Q\}).$$
Since all boxes of $\Gt$ have the same normalized Lebesgue measure, it follows that
\[ \mu(\{Q\})=\f 1 {|\Og|}=\mu_d(Q)=\mu_d (\pi^{-1}(\{Q\})),\  \ \qquad Q\in\Omega.\]
Thus,  $\pi:\ \T^d\to \Og$ pushes normalised Lebesgue  measure $\mu_d$ on $\T^d$
forward to the uniform probability measure $\mu$ on $\Og$.
In particular,  if $\xi^1,\ldots,\xi^m$ are independent random points distributed according to $\mu_d$ on $\T^d$, then
\[
 u_i:=\pi(\xi^i),
 \qquad 1\le i\le m,
\]
are independent random points distributed uniformly on $\Omega$.

We call a partition $\Pt$ of $\T^d$ $\Gt$-coarse if each cell of $\Pt$ is a union of boxes from $\Gt$. In this case,  for each $A\in\Pt$, we denote by  $\mathcal{C}(A)$ the collection of all boxes $Q\in\Gt$ that are contained in $A$. By definition,  for each  $\bn\in [L]_0^d$, $\CD_{\bn}$ is a \emph{$\Gt$-coarse} partition of $\T^d$.
 If $\Pt$ is $\Gt$-coarse, define
\[
 \Pt^\Omega
 :=\bigl\{\mathcal{C}(A):\  A\in\Pt\bigr\}.
\]
Then $\Pt^\Omega$ is a partition of $\Omega$. Conversely, every partition of $\Omega$ determines, by taking unions of boxes in $\Gt$, a unique $\Gt$-coarse partition of $\T^d$. This correspondence preserves cardinality, refinement, and regularity.

If $g:\T^d\to\C$ is constant on the cells of a $\Gt$-coarse partition $\Pt$, we say  $g$ is  $\Pt$-constant  on $\T^d$. In this case,  there exists  a unique $\Pt^\Og$-constant function $g^\Omega:\Omega\to\C$ such that
\begin{equation}\label{8-1}
 g=g^\Omega\circ\pi.
\end{equation}
Moreover,
\[
 \norm g_{L^\infty(\T^d)}=\norm{g^\Omega}_{\ell^\infty(\Omega)},
 \qquad
 \norm g_{L^2(\T^d)}=\norm{g^\Omega}_{L^2(\Omega,\mu)}.
\]
In what follows, we identify a $\Gt$-coarse partition with the corresponding partition of $\Omega$, and we omit the superscript $\Omega$ whenever no confusion can arise.

\subsection{The macro-partitions}\label{sec-8-1}

Let $q$ be an integer satisfying
\[
 2\le q\le\sqrt{L+1}.
\]
The exact value of $q$ will be specified in the next subsection.  Put $h:=\Bigl\lceil\frac{L+1}{q}\Bigr\rceil$.
Since $q^2\leq L+1$, we have $$(q-1)h<L+1\leq qh.$$ Thus,  $[L]_0=\{0, 1,\ldots, L\}$ can be written as the disjoint union of $q$ consecutive intervals
\[
 I_1,\ldots,I_q
\]
such that
\[
 |I_1|=\cdots=|I_{q-1}|=h,
 \qquad
 |I_q|\le h.
\]
For a multi-index $\beta\in\{1,\dots,q\}^d$,  define $$C_\beta:=I_{\beta_1}\times\dots\times I_{\beta_d}\subset [L]_0^d.$$
The sets $C_\be$ form a partition of $[L]_0^d$.

We call   $\beta\in \{1,2,\ldots, q\}^d$
\emph{retained} if $C_\beta\cap S_{L+d}\ne\emptyset$, and  denote  the set of all retained indices by
\[\mathcal{I}:= \Bl\{ \be\in  \{1, 2,\ldots, q\}^d:\   \  C_\be \cap S_{L+d} \neq \emptyset\Br\}.\]
For $\beta\in\mathcal{I}$, define
\begin{equation}\label{eq:abeta}
 a(\beta)_j
 :=\max\{n_j:\bn=(n_1,\ldots,n_d)\in C_\beta\cap S_{L+d}\},
 \qquad 1\le j\le d,
\end{equation}
and put
\[
 a(\beta):=(a(\beta)_1,\ldots,a(\beta)_d)\in C_\be,
 \qquad
 \Qt_\beta:=\CD_{a(\beta)}.
\]
For every $\bn\in C_\beta\cap S_{L+d}$, we have $\bn\preceq a(\beta)$.

Let  $\be\in\mathcal{I}$. Recall that $\Gt=\CD_{(L,\ldots, L)}$.
Clearly, $\Qt_\beta$ is a $\Gt$-coarse partition of $\T^d$.
 We regard $\Qt_\beta$ also as the induced partition of $\Omega$. Let $H_\beta$ be the space of all $\Qt_\beta$-constant functions on $\Omega$, and let $P_\beta$ be the associated conditional expectation on $L^2(\Omega,\mu)$.

\begin{lemma}\label{lem:macro}
The following statements hold.
\begin{enumerate}[\rm (a)]
 \item For every $\beta\in\mathcal{I}$, the partition $\Qt_\beta$ is regular and $\Gt$-coarse.
 \item $|\mathcal{I}|\le dq^{d-1}$.
 \item The operators $P_\beta$, $\beta\in\mathcal{I}$, pairwise commute.
 \item For every $\tb\in M_{L+d}\cup S_{L+d}$, there exists $\beta\in\mathcal{I}$ such that
 \[
  \tb\preceq a(\beta).
 \]
 Consequently, $\Qt_\beta$ refines $\CD_{\tb}$.
 \item For every $\beta\in\mathcal{I}$,
 \[
  |\Qt_\beta|
  \le\ell_0^d2^{L+2d+d(h-1)}.
 \]
 Consequently,
 \begin{equation}\label{eq:macro-r}
  r:=\max_{\beta\in\mathcal{I}}|\Qt_\beta|
  \le C_d\ell_0^dN^{1+d/q}.
 \end{equation}
\end{enumerate}
\end{lemma}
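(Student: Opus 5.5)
We take the five assertions in order; each reduces to the structure of the dyadic partitions $\CD_{\bn}$ from Section~7.

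For (a), $\Qt_\beta=\CD_{a(\beta)}$ with $a(\beta)\in C_\beta\subset[L]_0^d$. Every box of $\CD_{a(\beta)}$ has measure $1/|\CD_{a(\beta)}|$ by \eqref{7-2-b}, so the partition is regular. Since $a(\beta)\preceq\bL$, the partition $\Gt=\CD_{\bL}$ refines $\CD_{a(\beta)}$, so $\Qt_\beta$ is $\Gt$-coarse. Regularity transfers to $\Omega$ because the correspondence preserves cell cardinalities.

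For (c), under the identification of $\Gt$-coarse functions with functions on $\Omega$, $P_\beta$ is the restriction of $P_{a(\beta)}$ to $\Gt$-measurable functions. Conditional expectations onto coarser $\sigma$-algebras preserve this class, since $P_{\bn}P_{\bL}=P_{\bn}$. Hence \eqref{7-3} gives $P_\beta P_{\beta'}=P_{a(\beta)\wedge a(\beta')}=P_{\beta'}P_\beta$.

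For (d), first take $\tb\in S_{L+d}$. The sets $C_\beta$ partition $[L]_0^d$, so $\tb\in C_\beta$ for a unique $\beta$. That $\beta$ is retained, and $\tb\preceq a(\beta)$ by definition of the coordinatewise maxima. If instead $\tb\in M_{L+d}$, Lemma~\ref{lem-7-4} gives $\bm\in S_{L+d}$ with $\tb\preceq\bm$, and we apply the previous case to $\bm$. The refinement statement follows from the remark that $\CD_{\bn}$ refines $\CD_{\bm}$ when $\bm\preceq\bn$.

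For (b), we count retained $\beta$. Write $I_i=[(i-1)h,\,ih-1]\cap[L]_0$. If $\beta$ is retained, some $\bn\in C_\beta$ has $|\bn|_1=L+d$, so
\[
h\sum_j(\beta_j-1)\le L+d\le h\sum_j\beta_j-d .
\]
Hence $s:=\sum_j\beta_j$ satisfies $(L+2d)/h\le s\le (L+d)/h+d$. The interval has length $d-d/h<d$, so $s$ takes at most $d$ integer values. For each fixed $s$, the constraint $\beta_d=s-\sum_{j<d}\beta_j$ determines $\beta_d$, leaving at most $q^{d-1}$ choices. This gives $|\mathcal I|\le dq^{d-1}$.

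For (e), which I expect to be the main obstacle, we use $|\Qt_\beta|=(2\ell_0)^d2^{|a(\beta)|_1}$. Fix any $\bn\in C_\beta\cap S_{L+d}$. Each coordinate of $a(\beta)$ lies in the same interval $I_{\beta_j}$ of length at most $h$ as $n_j$, so $a(\beta)_j\le n_j+h-1$. Therefore $|a(\beta)|_1\le L+d+d(h-1)$ and $|\Qt_\beta|\le \ell_0^d2^{d}2^{L+d+d(h-1)}=\ell_0^d2^{L+2d+d(h-1)}$.

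For \eqref{eq:macro-r}, note $2^{L}<2N$ and $h\le (L+1)/q+1$. Then
\[
2^{dh}\le 2^{d}2^{d(L+1)/q}\le 2^{d}(4N)^{d/q},
\]
using $2^{L+1}<4N$. This yields $r\le C_d\ell_0^dN^{1+d/q}$.

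The only delicate point in (e) is the bookkeeping: the maximum $a(\beta)$ may leave $S_{L+d}$, overshooting $L+d$ by at most $d(h-1)$. This overshoot is the source of the exponent $d/q$.
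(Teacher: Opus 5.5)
Your proposal is correct and follows essentially the same route as the paper: (a), (c), (d) come from $a(\beta)\preceq\bL$, \eqref{7-3}, and Lemma~\ref{lem-7-4}; (b) confines $|\beta|_1$ to an interval containing at most $d$ integers; and (e) uses the overshoot bound $a(\beta)_j-n_j\le h-1$. Your lower bound $L+d\le h|\beta|_1-d$ in (b) is marginally sharper than the paper's strict inequality $L+d<h|\beta|_1$, and your final constant in (e) is bookkept slightly differently, but neither change affects the argument.
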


\begin{proof}
Part (a) follows from the definition of $\Qt_\beta$ and the fact that $a(\beta)\preceq\bL$. 

(b)  For each  $\be\in \mathcal{I}$, there exists $\bn\in C_\be$ such that  $|\bn|_1=L+d$. Since 
\[ n_j\in I_{\be_j} \subset \Bl\{ (\be_j-1) h, (\be_j-1) h+1, \ldots, \be_j h-1\Br\},\]
we have
$$ (\be_j-1) h \leq n_j<\be_j h,\  \ 1\leq j\leq d.$$
Summing these inequalities  over $1\leq j\leq d$ gives 
\[ h (|\be|_1-d) \leq |\bn|_1 =L+d < |\be|_1 h.\]
Therefore, 
\[ \f {L+d} h <|\be|_1 \leq \f {L+d} h+d.\]
Define 
\[ A=\ZZ\cap \Bl( \f {L+d} h, \f {L+d} h +d\Br].\]
Then $|A|\leq d$ and the preceding estimate shows that $|\be|_1\in A$ for every $\be\in \mathcal{I}$. 
Thus, 
\[ \mathcal{I}\subset \Bl\{ (\be_1, \ldots, \be_{d-1}, \ell-\sum_{j=1}^{d-1}\be_j):\   \ \be_j\in [q],\  1\leq j\leq d-1,\  \ \ell\in A\Br\}.\]
It follows that 
$$|\mathcal{I}|\leq |A| q^{d-1}\leq d q^{d-1}. $$

Part (c) follows from \eqref{7-3}.

(d)\  \ Let $\tb\in M_{L+d}\cup S_{L+d}$. If $\tb\in M_{L+d}$, by Lemma~\ref{lem-7-4}, we may choose  $\sb\in S_{L+d}$ such that $\tb\preceq\sb$. If $\tb\in S_{L+d}$, we put $\sb:=\tb$. Since $\{C_\be:\  \ \be\in \{1,\ldots, q\}^d\}$ is a partition of $[L]_0^d$, there is a unique $\beta\in\{1,\ldots,q\}^d$ such that $\sb\in C_\beta\cap S_{L+d}$. Then $\beta\in\mathcal{I}$, and the definition of $a(\beta)$ gives
\[
 \tb\preceq\sb\preceq a(\beta).
\]
Thus, $\Qt_\beta=\CD_{a(\beta)}$ refines $\CD_{\tb}$.

(e)\  \ Fix $\beta\in\mathcal{I}$ and choose $\bn\in C_\beta\cap S_{L+d}$. Then  $\bn\preceq a(\be)$ and
$n_j$ and $a(\beta)_j$ belong to the same interval $I_{\beta_j}$. Thus,
\[
 0\le a(\beta)_j-n_j\le  \diam(I_{\be_j})=|I_{\be_j}|-1\leq h-1,
 \qquad 1\le j\le d.
\]
It follows that
\[
 |a(\beta)|_1\le|\bn|_1+d(h-1)=L+d+d(h-1).
\]
Using \eqref{7-2-b}, we obtain
\[
 |\Qt_\beta|=|\CD_{a(\be)}|
 =(2\ell_0)^d2^{|a(\beta)|_1}
 \le\ell_0^d2^{L+2d+d(h-1)}.
\]
Since $2^L\le2N$ and $h-1<(L+1)/q$, the last estimate implies
\[
 |\Qt_\beta|\le \ell_0^d 2^{2d+\f dq}  2^{L(1+\f dq)}\leq  C_d\ell_0^dN^{1+d/q}.
\]
Taking the maximum over $\beta\in\mathcal{I}$ proves \eqref{eq:macro-r}.

\end{proof}

\subsection{Transfer of the approximation}

Define
\[
 V:=\sum_{\beta\in\mathcal{I}}H_\beta.
\]

\begin{lemma}\label{prop:transfer}
For every $f\in\cT(N,d)$, there exists $F\in V$ such that
\[
 \norm{f-F\circ\pi}_{L^\infty(\T^d)}
 \le(L+1)^d\vartheta_0\norm f_\infty,
\]
where
\begin{equation}\label{eq:theta-star}
 \vartheta_0:=\frac{2\pi dA^d}{\ell_0}.
\end{equation}
\end{lemma}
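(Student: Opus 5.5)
We approximate $f$ block by block using the de la Vall\'ee Poussin decomposition of Lemma~\ref{lem:dlvp}. Since $\Gamma(N,d)\subset[-N,N]^d$, Lemma~\ref{lem:dlvp}(i) gives
\[
 f=\sum_{\bn\in[L]_0^d}\Psi_{\bn}f=\sum_{\bn\in\cA(f)}\Psi_{\bn}f .
\]
By \eqref{eq:active-simplex}, every active index satisfies $\bn\in M_{L+d}$. For each such $\bn$ we replace $\Psi_{\bn}f$ by a function that is constant on the boxes of $\CD_{\bn}$. Concretely, in each box $Q\in\CD_{\bn}$ we fix a point $\by_Q\in Q$ and set
\[
 g_{\bn}:=\sum_{Q\in\CD_{\bn}}\Psi_{\bn}f(\by_Q)\1_Q .
\]
One could also use $P_{\bn}\Psi_{\bn}f$; the point-evaluation choice is convenient because it gives the pointwise bound directly. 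Lemma~\ref{lem:cellwise} then yields, for every $\bx\in\T^d$,
\[
 |\Psi_{\bn}f(\bx)-g_{\bn}(\bx)|\le\vartheta_0\norm f_\infty .
\]
Summing over at most $|[L]_0^d|=(L+1)^d$ active indices, the function $g:=\sum_{\bn\in\cA(f)}g_{\bn}$ satisfies $\norm{f-g}_\infty\le(L+1)^d\vartheta_0\norm f_\infty$.

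Next we show that $g$ has the form $F\circ\pi$ with $F\in V$. Each $g_{\bn}$ is $\CD_{\bn}$-constant, and $\CD_{\bn}$ is $\Gt$-coarse because $\bn\preceq\bL$. By \eqref{8-1} we can therefore write $g_{\bn}=g_{\bn}^\Omega\circ\pi$. By Lemma~\ref{lem:macro}(d), applied to $\tb=\bn\in M_{L+d}$, there is a $\beta=\beta(\bn)\in\mathcal I$ with $\bn\preceq a(\beta)$. Hence $\Qt_\beta=\CD_{a(\beta)}$ refines $\CD_{\bn}$, and $g_{\bn}$ is constant on the cells of $\Qt_\beta$. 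Since both partitions are $\Gt$-coarse, this refinement transfers to $\Omega$, so $g_{\bn}^\Omega\in H_{\beta(\bn)}$. Setting
\[
 F:=\sum_{\bn\in\cA(f)}g_{\bn}^\Omega\in\sum_{\beta\in\mathcal I}H_\beta=V,
\]
we get $F\circ\pi=g$ by linearity of composition with $\pi$. This proves the lemma.

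No step is a genuine obstacle here; the work was done earlier, in the cellwise oscillation bound (Lemma~\ref{lem:cellwise}) and the covering property of the macro-partitions (Lemma~\ref{lem:macro}(d)). The only point needing care is the bookkeeping of the identification between $\Gt$-coarse partitions of $\T^d$ and partitions of $\Omega$. Specifically, one must check that if $\CD_{a(\beta)}$ refines $\CD_{\bn}$ on $\T^d$, then every $\CD_{\bn}$-constant function, pulled to $\Omega$, lies in $H_\beta$. This holds because every cell of $\Qt_\beta$ is a union of $\Gt$-boxes contained in a single cell of $\CD_{\bn}$.
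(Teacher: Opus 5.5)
Your proposal is correct and follows essentially the same route as the paper. You approximate $\Psi_{\bn}f$ by point evaluations on the boxes of $\CD_{\bn}$, with the error controlled by Lemma~\ref{lem:cellwise}; you assign each active $\bn\in M_{L+d}$ to a retained index $\beta(\bn)$ via Lemma~\ref{lem:macro}(d), so that the pulled-back function lies in $H_{\beta(\bn)}$; and you sum over at most $(L+1)^d$ active blocks, with only the order of presentation differing from the paper.
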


\begin{proof}
Define
\[ \cA(f):=\Bl\{\bn\in \ZZ_+^d:\   \ \Psi_{\bn} f\   \ \text{is not identically zero}\Br\}.\]
 For each $\bn\in \cA(f)$, put $f_{\bn}=\Psi_{\bn} f$.
 By \eqref{eq:active-simplex},  $\cA(f)\subset M_{L+d}$. Hence,
 by  Lemma~\ref{lem:macro}(d), for every $\bn\in\cA(f)$ there is a retained index $\beta(\bn)\in\mathcal{I}$ such that
\[
 \bn\preceq a(\beta(\bn)),
\]
and hence $\Qt_{\beta(\bn)}$ refines $\CD_{\bn}$.

For every $Q\in\CD_{\bn}$, choose a point $\bx_{\bn,Q}\in Q$, and define
\[
 \wt g_{\bn}(\bx)
 :=\sum_{Q\in\CD_{\bn}}f_{\bn}(\bx_{\bn,Q})\1_Q(\bx).
\]
Lemma~\ref{lem:cellwise} gives
\[
 \norm{f_{\bn}-\wt g_{\bn}}_\infty
 \le\vartheta_0\norm f_\infty.
\]
The function $\wt g_{\bn}$ is constant on the cells of $\CD_{\bn}$, and therefore also on the cells of $\Qt_{\beta(\bn)}$. By \eqref{8-1}, there exists a unique  $g_{\bn}\in H_{\beta(\bn)}$ such that $\wt g_{\bn}=g_{\bn}\circ \pi$.

  Now  define
\[
  F:=\sum_{\bn\in\cA(f)}g_{\bn}.
\]
Then
\[ F\in \sum_{\bn\in\cA(f)}  H_{\beta(\bn)}\subset V.\]
By Lemma~\ref{lem:dlvp}(i) and $|\cA(f)|\le(L+1)^d$,
\begin{align*}
 \norm{f-F\circ\pi}_\infty
 &\le\sum_{\bn\in\cA(f)}\norm{f_{\bn}-\wt g_{\bn}}_\infty\le(L+1)^d\vartheta_0\norm f_\infty.
\end{align*}

\end{proof}

We shall also use the following standard probability amplification argument.

\begin{lemma}\label{lem:amplify}
Let $B>1$ and $0<\eta<\eta_0<1$. Then
\beq\label{8-5}
 m_*(N,d,B,\eta)
 \le\left\lceil\frac{\log(1/\eta)}{\log(1/\eta_0)}\right\rceil
 m_*(N,d,B,\eta_0).
\enq
\end{lemma}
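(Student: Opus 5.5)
The plan is the usual independent-blocks amplification. Put $m_0:=m_*(N,d,B,\eta_0)$ and $k:=\lceil\log(1/\eta)/\log(1/\eta_0)\rceil$. The goal is to show that a sample of size $km_0$ already satisfies the norming inequality with probability at least $1-\eta$. By the definition of $m_*$ as a least integer, this gives \eqref{8-5}.

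First, we fix the meaning of $m_0$. By the definition of $m_*$, for $m=m_0$ the event
\[
 E(\xi^1,\ldots,\xi^{m_0}):=\bigl\{\norm f_\infty\le B\norm f_{X_{m_0}}\ \text{for all } f\in\cT(N,d)\bigr\}
\]
has probability at least $1-\eta_0$. One should check that this event is measurable. It can be written through a countable dense subset of the unit sphere of the finite-dimensional space $\cT(N,d)$, together with continuity of $f\mapsto\norm f_{X_m}$.

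Next, we split the sample $\xi^1,\ldots,\xi^{km_0}$ into $k$ consecutive blocks
\[
 Y_i:=\{\xi^{(i-1)m_0+1},\ldots,\xi^{im_0}\},\qquad 1\le i\le k.
\]
Each block is a sequence of $m_0$ independent $\mu_d$-distributed points, so it has the same law as $X_{m_0}$. The blocks are also mutually independent. Hence the events $E_i$ that "$Y_i$ norms $\cT(N,d)$ with constant $B$" are independent, and each has probability at least $1-\eta_0$. This gives
\[
 \Prb\Bigl(\bigcap_{i=1}^k E_i^c\Bigr)\le\eta_0^k\le\eta,
\]
where the last inequality follows from $k\log(1/\eta_0)\ge\log(1/\eta)$.

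On the complement of $\bigcap_{i=1}^k E_i^c$, some block $Y_i$ norms the space. Since $Y_i\subset X_{km_0}$, we have $\norm f_{Y_i}\le\norm f_{X_{km_0}}$, and therefore
\[
 \norm f_\infty\le B\norm f_{Y_i}\le B\norm f_{X_{km_0}}
\]
for all $f\in\cT(N,d)$ simultaneously.

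The remaining step is to pass from $m=km_0$ to the stated bound, using the definition of $m_*$ as the least such integer. This gives $m_*(N,d,B,\eta)\le km_0$. Monotonicity in $m$ is not even needed, because only this one value of $m$ is used. None of the steps is a real obstacle. The only point needing care is measurability of the "for all $f$" event, which the reduction to a countable dense set settles.
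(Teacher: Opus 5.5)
Your proposal is correct and follows essentially the same independent-blocks argument as the paper. The paper uses the same split of the first $km_0$ points into $k$ blocks, the same bound $\eta_0^k\le\eta$, and the same fact that a superset of a norming set is norming. Your measurability remark is a small extra that the paper does not discuss.
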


\begin{proof}
Put
\[
 m_0:=m_*(N,d,B,\eta_0),
 \qquad
 k:=\left\lceil\frac{\log(1/\eta)}{\log(1/\eta_0)}\right\rceil.
\]
Let $\xi^1, \xi^2, \ldots$ be iid random points drawn uniformly from $\T^d$.
Divide the first $km_0$ sample points into the independent blocks
\[
 X^{(j)}:=\{\xi^{(j-1)m_0+1},\ldots,\xi^{jm_0}\},
 \qquad 1\le j\le k.
\]
For each $j$, let $E_j$ be the event that $X^{(j)}$ is not a $B$-norming set for $\cT(N,d)$. Then $\Prb(E_j)\le\eta_0$, and the events $E_1,\ldots,E_k$ are independent. Hence
\[
 \Prb(E_1\cap\cdots\cap E_k)\le\eta_0^k\le\eta.
\]
Let $X=\bigcup_{j=1}^k X^{(j)}$. If at least one of the blocks $X^{(j)}$ is a $B$-norming set, then their union  $X$ is also a $B$-norming set.  It follows that
 \eq{\PP& \Bl\{  \|f\|_\infty\leq B\|f\|_{X}, \   \ \forall f\in\cT(N,d)\Br\}\\
 &\ge \PP \Bl( \bigcup_{j=1}^k E_j^c\Br)=\PP \Bl[\big(\bigcap_{j=1}^k E_j\big)^c\Br]=1-\Prb(E_1\cap\cdots\cap E_k)\\
  &\ge 1-\eta_0^k\ge 1-\eta.
 }
 This means that  $X$ is a $B$-norming set for $\cT(N, d)$ with probability at least $1-\eta$, and hence proves \eqref{8-5}.

\end{proof}

\subsection{Proof of Theorem~\ref{thm:main}}\label{sec:proof}

First, we specify parameters. Fix $d\ge2$ and $\varepsilon\in(0,1)$. Set
\[
 q:=\left\lceil\frac{8d}{\varepsilon}\right\rceil,
 \qquad
 \theta:=\frac{\varepsilon}{8},
 \qquad
 K:=dq^{d-1},
 \qquad
 \ell:=\left\lceil\frac3\theta\right\rceil,
\]
and let
\[
 B_0:=2\ell M_K^\ell.
\]
All these quantities depend only on $d$ and $\varepsilon$.

Let $B^{(0)}_{d,\varepsilon}$ be the norming constant furnished by Corollary~\ref{cor:N2}. If $q>\sqrt{L+1}$, then $N<2^{q^2}$, and hence the extra factor $N$ in the estimate of Corollary~\ref{cor:N2} can be absorbed into a constant depending only on $d$ and $\varepsilon$. Since $1+\log(1/\eta)\le C\log(2/\eta)$, that corollary proves Theorem~\ref{thm:main} in this range after increasing $C(d,\varepsilon)$. We may therefore assume that
\[
 q\le\sqrt{L+1}.
\]
We first prove the result in this range with failure probability $\eta_0=1/2$.
Choose
\begin{equation}\label{eq:ell0-choice}
 \theta_*:=\frac1{4(B_0+1)(L+1)^d},
 \qquad
 \ell_0:=\left\lceil\frac{2\pi dA^d}{\theta_*}\right\rceil.
\end{equation}
Then $\vartheta_0\le\theta_*$, where $\vartheta_0$ is the constant in \eqref{eq:theta-star}.

Enumerate the retained indices as
\[
 \mathcal{I}=\{\beta_1,\ldots,\beta_{K'}\},
 \qquad \text{where}\ \  K'\le K.
\]
Define $\Pt_j:=\Qt_{\beta_j}$ for $1\leq j\leq K'$, and
$\Pt_j:=\Qt_{\beta_1}$ for $K'<j\le K$.
Repetitions are permitted in Theorem~\ref{thm:abstract}. Let $P_j$ be the conditional expectation associated with $\Pt_j$, and let $H_j$ be the space of all $\Pt_j$-constant functions on $\Omega$. By Lemma~\ref{lem:macro},
\[
 V=H_1+\cdots+H_K,
 \qquad
 P_iP_j=P_jP_i,
\]
and
\begin{equation}\label{eq:rbound}
 r:=\max_{1\le j\le K}|\Pt_j|
 \le C_d\ell_0^dN^{1+d/q}.
\end{equation}
 Thus we may  apply Theorem~\ref{thm:abstract} with exactly $K=dq^{d-1}$ partitions.

Let $\xi^1,\ldots,\xi^{m_0}$ be independent random points distributed according to $\mu_d$. Then $u^i=\pi(\xi^i)$, $i=1,\ldots, m_0$ are iid random elements uniformly distributed  on $\Omega$.  The partitions $\Pt_1,\dots,\Pt_K$ constructed above are
regular partitions of $\Omega$ with commuting conditional expectations. Hence, by Theorem~\ref{thm:abstract}, if
\begin{equation}\label{8-4}
 m_0\ge3C_K^2K^2r^{1+\theta}\log(4Kr),
\end{equation}
then, with probability at least $1/2$,
\begin{equation}\label{eq:Vnorming}
 \norm F_{\ell^\infty(\Omega)}
 \le B_0\max_{1\le i\le m_0}|F(u_i)|,
 \qquad \forall F\in V.
\end{equation}

Fix a realisation for which \eqref{eq:Vnorming} holds. Let $f\in\cT(N,d)$ and assume first that $\norm f_\infty=1$. By Lemma~\ref{prop:transfer}, there exists $F\in V$ such that
\[
 \norm{f-F\circ\pi}_\infty
 \le(L+1)^d\theta_*
 =\frac1{4(B_0+1)}=:E.
\]
Using \eqref{eq:Vnorming}, we obtain
\[
1=\norm f_\infty\le\norm{F\circ\pi}_{L^\infty(\T^d)}+E=\norm{F}_{\ell^\infty(\Omega)}+E
\le B_0\max_{1\leq i\leq m_0}\abs{F(u_i)}+E.
\]
However, for each $1\leq i\leq m_0$,
\[
\abs{F(u_i)}=\abs{F(\pi(\xi^i))}\le\abs{f(\xi^i)}+E\le\norm f_{X_{m_0}}+E .
\]
Combining these estimates yields
\[
\norm f_\infty=1\le B_0\norm f_{X_{m_0}}+(B_0+1)E=B_0\norm f_{X_{m_0}}+\tfrac14,
\]
which implies
\[
\norm f_\infty\le\tfrac43 B_0\norm f_{X_{m_0}}.
\]
By homogeneity, the same estimate holds for every $f\in\cT(N,d)$. Thus, with probability at least $1/2$,
\begin{equation}\label{eq:base-random-norming}
 \norm f_\infty
 \le B^{(1)}_{d,\varepsilon}\norm f_{X_{m_0}},
 \qquad f\in\cT(N,d),
\end{equation}
where
\[
 B^{(1)}_{d,\varepsilon}:=\f {4B_0}3=\f {8\ell M_K^\ell}3.
\]

It remains to estimate the right-hand side of \eqref{8-4}. By \eqref{eq:ell0-choice},
\[
 \ell_0
 \le1+8\pi dA^d(B_0+1)(L+1)^d
 \le C(d,\varepsilon)(L+1)^d.
\]
Hence \eqref{eq:rbound} gives
\begin{equation}\label{eq:rbound-final}
 r\le C(d,\varepsilon)(L+1)^{d^2}N^{1+d/q}.
\end{equation}
Since $d/q\le\varepsilon/8$ and $\theta=\varepsilon/8$,
\begin{equation}\label{eq:exponent}
 \left(1+\frac dq\right)(1+\theta)
 \le\left(1+\frac\varepsilon8\right)^2
 \le1+\frac\varepsilon3.
\end{equation}
It follows from \eqref{eq:rbound-final} that
\[
 r^{1+\theta}\log(4Kr)
 \le C(d,\varepsilon)
 N^{1+\varepsilon/3}\bigl(\log(2N)\bigr)^{2d^2+1}.
\]
Using the elementary estimate
\[
 \bigl(\log(2N)\bigr)^{2d^2+1}
 \le C(d,\varepsilon)N^{2\varepsilon/3},
 \qquad N\ge2,
\]
we conclude that \eqref{8-4} holds for some
\begin{equation}\label{eq:m0-bound}
 m_0\le C(d,\varepsilon)N^{1+\varepsilon}.
\end{equation}
This proves the required estimate when the failure probability is $1/2$.

We now let $\eta\in(0,1)$. If $\eta<1/2$, apply Lemma~\ref{lem:amplify} with $\eta_0=1/2$; if $\eta\ge1/2$, use the preceding estimate directly. In either case, \eqref{eq:m0-bound} gives
\[
 m_*(N,d,B^{(1)}_{d,\varepsilon},\eta)
 \le C(d,\varepsilon)N^{1+\varepsilon}\log\frac2\eta.
\]
Since adding sample points cannot destroy the norming property, the same conclusion holds whenever
\[
 m\ge C(d,\varepsilon)N^{1+\varepsilon}\log\frac2\eta.
\]
This proves the required sampling estimate in the range $q\le\sqrt{L+1}$.

Finally, we estimate the norming constant and combine the two ranges of $N$. Set
\[
 B_{d,\varepsilon}:=\max\{B^{(0)}_{d,\varepsilon},B^{(1)}_{d,\varepsilon}\}.
\]
From the proof of Corollary~\ref{cor:N2}, one may take
\[
 B^{(0)}_{d,\varepsilon}
 \le4\bigl(C_2(1+2\varepsilon^{-1})\bigr)^d.
\]
On the other hand, since
\[
 q\le\frac{9d}{\varepsilon},
 \qquad
 \ell\le\frac{25}{\varepsilon},
\]
we have
\[
 K=dq^{d-1}\le(9d)^d\varepsilon^{-d+1},
 \qquad
 K\ell\le25(9d)^d\varepsilon^{-d}.
\]
Using $M_K\le4^K$, we obtain
\begin{align*}
 B^{(1)}_{d,\varepsilon}
 &=\f 83\ell M_K^\ell\\
 &\le\frac{100}{\varepsilon}\,4^{K\ell}
 \le\exp\bigl(C_d\varepsilon^{-d}\bigr).
\end{align*}
The same upper bound, after increasing $C_d$, holds for $B^{(0)}_{d,\varepsilon}$. Hence
\[
 B_{d,\varepsilon}
 \le\exp\bigl(C_d\varepsilon^{-d}\bigr).
\]
This completes the proof of Theorem~\ref{thm:main}.

\end{document}